\pgfplotsset{compat=1.18}
\newtheorem{theorem}{Theorem}[section]
\newtheorem{proposition}{Proposition}[section]
\newtheorem{definition}{Definition}[section]    
\newtheorem{example}{Example}[section]
\newtheorem{lemma}{Lemma}[section]
\newtheorem{remark}{Remark}[section]
\def\R{\mathbb{R}}
\def\C{\mathbb{C}}
\def\L{\mathcal{L}}
\def\Z{\mathbb{Z}}
\def\d{\mathrm{d}}
\def\N{\mathbb{N}}
\def\G{\mathcal{G}}
\def\E{\mathcal{E}}
\newcommand\norm[1]{\|#1\|}
\newcommand\1{\mathbbm{1}}
\newcommand\spt{\mathrm{spt}}
\def\d{\mathrm{d}}
\def\V{\mathcal{V}}
\newcommand{\ip}[2]{\left\langle #1,#2\right\rangle}
\begin{document}

\title[Observable sets for Schr\"odinger equations on combinatorial graphs]{Observable sets for Schr\"odinger equations on combinatorial graphs}

\author{Zhiqiang Wan}
\thanks{}
\address{School of Mathematical Sciences, University of Science and Technology of China, No. 96 Jinzhai Road, Baohe District, Hefei, Anhui Province, China}
\email{ZhiQiang\_Wan576@mail.ustc.edu.cn}

\author{Heng Zhang}
\thanks{}
\address{School of Mathematical Sciences, University of Science and Technology of China, No. 96 Jinzhai Road, Baohe District, Hefei, Anhui Province, China}
\email{hengz@mail.ustc.edu.cn}

\date{\today}

\begin{abstract}
We study observable sets for Schr\"odinger equations on combinatorial graphs.
For one-dimensional lattice Schr\"odinger operators
\(H=-\Delta_{\mathrm{disc}}+V\) with \(V(n)\to c\in\mathbb R\) as
\(|n|\to\infty\), we prove that a set \(E\subset\mathbb Z\) is observable at
some time, equivalently at any time, if and only if it satisfies a
local arithmetic condition. This reveals an arithmetic obstruction absent from
the Euclidean theory, where thickness is the decisive condition. The same
criterion also characterizes observability for the corresponding heat equation
on \(\mathbb Z\). In higher-dimensional lattices, we prove observability from
the complement of any finite set. We further obtain arithmetic criteria on discrete tori, showing that positive density alone does not
ensure observability.
\end{abstract}

\maketitle

\section{Introduction}
This work is situated within the framework of observability for
Schr\"odinger equations on different geometric spaces. In the present paper, we focus on
the discrete setting, where the underlying space is a combinatorial graph
\(G=(\mathcal V,\mathcal E)\). In this setting, the continuum Laplacian is
replaced by the discrete graph Laplacian \(\Delta_{\mathrm{disc}}\), defined for
\(f:\mathcal V\to\mathbb C\) by
\[
\Delta_{\mathrm{disc}} f(x)
:= \sum_{y\sim x} \bigl(f(y)-f(x)\bigr),
\]
where \(y\sim x\) means that \(x\) and \(y\) are adjacent vertices in \(G\).

Our motivation comes from the well-developed observability theory for
Schr\"odinger equations in continuous settings, such as \(\mathbb R^d\) and
\(\mathbb T^d\). In the graph setting, we study observable sets for
Schr\"odinger equations
\begin{equation}\label{eq:DS}
\begin{cases}
i\partial_t u(x,t)=Hu(x,t), & (x,t)\in \mathcal V\times(0,\infty),\\
u(x,0)=u_0(x)\in \ell^2(\mathcal V), & x\in \mathcal V,
\end{cases}
\end{equation}
where $H=-\Delta_{\mathrm{disc}}+V$ and \(V:\mathcal V\to\mathbb R\) is a real-valued potential. We use the following
notion of observability.

\begin{definition}
Let \(T>0\). A subset \(E\subset \mathcal V\) is called an
\textbf{observable set at time \(T\)} for \eqref{eq:DS} if there exists a
constant \(C_{\mathrm{obs}}=C_{\mathrm{obs}}(T,E)>0\) such that the observable inequality
\begin{equation}\label{eq:obs_ineq}
  \|u_0\|_{\ell^{2}(\mathcal V)}^{2}
  \le
  C_{\mathrm{obs}}
  \int_0^T\|\1_{E}u(\cdot,t)\|_{\ell^{2}(\mathcal V)}^{2}\,\mathrm{d}t
\end{equation}
holds for every solution \(u\) of \eqref{eq:DS} on \(G\). We say that \(E\) is an \textbf{observable set at some time} if it is observable
at time \(T\) for at least one \(T>0\). We say that \(E\) is an
\textbf{observable set at any time} if it is observable at time \(T\) for every
\(T>0\).
\end{definition}

Let us first recall the standard relation between observability and
controllability for Schr\"odinger dynamics. Given a subset \(E\subset\mathcal V\),
one may associate with \eqref{eq:DS} the internally controlled system
\[
\begin{cases}
 i\partial_t y(t)=Hy(t)+\1_E v(t), & t\in(0,T),\\
 y(0)=y_0\in \ell^2(\mathcal V),
\end{cases}
\]
where the control \(v\) acts only on \(E\). Since \(H\) is self-adjoint, the
homogeneous Schr\"odinger flow is unitary. By the Hilbert Uniqueness Method \cite{Li88a,Li88b,TW09}, exact
controllability of this system at time \(T\) is equivalent to the
observable inequality for the adjoint homogeneous equation,
\[
  \|u_0\|_{\ell^2(\mathcal V)}^2
  \le C_T \int_0^T \|\1_E e^{-itH}u_0\|_{\ell^2(\mathcal V)}^2\,\d t,
  \qquad u_0\in\ell^2(\mathcal V).
\]
Thus the problem of characterizing observable sets is, equivalently, the
problem of determining from which subsets one can control the Schr\"odinger
equation. In this paper we formulate our results in the language of
observability, but the above duality gives the corresponding controllability
interpretation.

To state our first main result on observable sets for the one-dimensional
lattice \(\mathbb Z\), we introduce the following definition.

\begin{definition}
For a finite set $S\subset\mathbb Z$, denote
\begin{equation}\label{def:local-gcd}
    \mathfrak g(S):=
    \begin{cases}
    \gcd\{|s-s'|:s,s'\in S,\ s\ne s'\}, & |S|\ge 2,\\[2mm]
    0, & |S|\le 1.
    \end{cases}
\end{equation}
Here \(\gcd\) denotes the greatest common divisor. We say a set $E\subset\mathbb Z$ satisfies \textbf{LAC} (local arithmetic condition) if there
exists $L\in\mathbb \N_+$ such that
\begin{equation}\label{eq:ULA-condition}
    \mathfrak g\big(E\cap[m-L,m+L]\big)=1,
    \qquad \forall m\in\mathbb Z .
\end{equation}
\end{definition}
By definition $\mathfrak g(S)=1$ means that the differences of points in $S$
generate the whole group $\mathbb Z$. With this terminology, we obtain the following characterization for \eqref{eq:DS} with asymptotically constant potentials.
\begin{theorem}\label{thm:1dZ}
Let $G=\Z$, $E\subset\mathbb{Z}$ and $V(n) \to c \in \R, \mathrm{as} \ |n| \to \infty$. Then the following are equivalent:
\begin{enumerate}
    \item [(i)]$E$ is observable at some time;
    \item [(ii)]$E$ is observable at any time;
    \item [(iii)]$E$ satisfies LAC.
\end{enumerate}
\end{theorem}

Theorem~\ref{thm:1dZ} reveals a qualitative distinction between
observability on the lattice \(\Z\) and observability on the Euclidean line
\(\R\). To make this comparison precise, we recall the standard notion of
thick sets.

\begin{definition}
 Let \(X\in\{\mathbb R^d,\mathbb Z^d\}\), and let \(\mu_X\) denote Lebesgue
measure when \(X=\mathbb R^d\) and counting measure when \(X=\mathbb Z^d\).
A \(\mu_X\)-measurable set \(E\subset X\) is called \textbf{\(\gamma\)-thick}, for
some \(\gamma\in(0,1]\), if there exists \(L>0\) \((L\in\mathbb N_+\) when
\(X=\mathbb Z^d)\) such that
\[
\mu_X\bigl(E\cap\{y\in X:\|y-x\|_\infty\le L\}\bigr)
\ge
\gamma\,\mu_X\bigl(\{y\in X:\|y-x\|_\infty\le L\}\bigr),
\qquad \forall x\in X.
\]   
\end{definition}

In the continuous one-dimensional setting \(X=\mathbb R\), Huang, Wang and Wang \cite[Theorem~1.1]{HWW22} prove that   thickness gives
the exact geometric characterization of observable sets: a measurable set
\(E\subset\mathbb R\) is observable at some time if and only if it is
\(\gamma\)-thick for some \(\gamma>0\). This follows from the
Logvinenko--Sereda uncertainty principle, which states that if $E \subset \R$ is a thick set, then for all $f$ in $L^2(\R)$ with Fourier support contained in a union of finitely many intervals, one has $\norm{f}_{L^{2}\left(\R\right)}\leq C \norm{f}_{L^{2}\left(E\right)}$, see, for example \cite{K01}. Moreover, Su, Sun and Yuan \cite[Theorem~1.2]{SSY25} extend
this characterization to every observability time \(T>0\) for general
one-dimensional Schr\"odinger operators with bounded continuous potentials.
Their argument is quantitative and provides an explicit upper bound for
the observability cost in terms of \(T\) and the thickness parameters.

By contrast, in the discrete setting Theorem~\ref{thm:1dZ} reveals an
arithmetic obstruction to observability. This obstruction is already visible in the borderline relation between
thickness and LAC, as the following remark illustrates.

\begin{remark}
The threshold
\(\gamma=\frac12\) is sharp: obviously every \(\frac12\)-thick set
\(E\subset\mathbb Z\) ensures LAC in
Theorem~\ref{thm:1dZ}, whereas the sublattice \(2\mathbb Z\) is
\(\gamma\)-thick for every \(\gamma<\frac12\) but is not observable for
any \(T>0\). Thus, below this threshold, thickness alone is not sufficient to determine
observability. The observability of a set is also influenced by its arithmetic structure. For example, \(9\mathbb Z\cup(9\mathbb Z+3)\) is not observable at any time, while
\(9\mathbb Z\cup(9\mathbb Z+1)\) is observable at any time by
Theorem~\ref{thm:1dZ}. These two sets have the same thickness
$\frac{2}{9}$, but they have different local arithmetic structures. We also remark that LAC implies that positive thickness is a necessary condition for observability.   
\end{remark}

We note that the threshold value $\frac{1}{2}$ for this phenomenon also appears in sampling theory, see, for instance, \cite[Lecture 6]{OU16}. Our approach to proving Theorem \ref{thm:1dZ} goes beyond the thickness-based argument from \cite{HWW22}. We use a
Hautus-type resolvent criterion together with the recurrence structure of
the one-dimensional discrete Schr\"odinger operator. This reduces the
problem to a local arithmetic obstruction: LAC rules out invisible
eigenfunctions, while failure of LAC produces explicit approximate
eigenfunctions vanishing on the observation set. Thus the characterization
depends not only on density, but on the local arithmetic structure of \(E\). We also note that the fact that \(\frac12\)-thickness implies observability for the free Schr\"odinger equation
at any time can also be obtained by a classical idea based on the
discrete Logvinenko--Sereda uncertainty principle; see Appendix \ref{app:LS-half-thick}.

A similar phenomenon also appears for the heat equation on the lattice.
For \(T>0\), we say that \(E\subset\mathbb Z\) is \textbf{heat observable}
at time \(T\) if there exists a constant \(C_T>0\) such that
\begin{equation}\label{eq:heat-observability-asymptotic-Z}
    \|u_0\|_{\ell^2(\mathbb Z)}^2
    \le
    C_T
    \int_0^T
    \|\1_E e^{-tH}u_0\|_{\ell^2(\mathbb Z)}^2\,\d t,
    \qquad
    \forall u_0\in\ell^2(\mathbb Z).
\end{equation}
Equivalently, since \(H\) is bounded and self-adjoint in the present
setting, one may replace \(\|u_0\|_{\ell^2}\) by
\(\|u(T)\|_{\ell^2}=\|e^{-TH}u_0\|_{\ell^2}\) at the cost of changing the
constant \(C_T\). In the continuous setting, observable sets for the heat equation on
\(\mathbb R^d\) are governed by thickness-type geometric conditions; see Wang, Wang, Zhang and Zhang \cite{WWZZ19}. Their proof proceeds through the chain ``thickness \(\Rightarrow\) spectral inequality \(\Rightarrow\) H\"older-type interpolation inequality \(\Rightarrow\) observability'', using a telescoping-series argument for the last implication and recovering the necessity of thickness by testing observability on a specially chosen heat solution. By contrast, for the discrete heat equation
on \(\mathbb Z\), Theorem~\ref{thm:heat-local-arithmetic-Z} shows that the
same local arithmetic condition as in Theorem~\ref{thm:1dZ} characterizes
both observability at some time and observability at any time. Under the same asymptotically constant assumption, the conclusion is as follows.

\begin{theorem}
\label{thm:heat-local-arithmetic-Z}
Let \(G=\mathbb Z\), let \(E\subset\mathbb Z\), and assume that
\(V(n)\to c\in\mathbb R\) as \(|n|\to\infty\). Then the following are
equivalent:
\begin{enumerate}
    \item[(i)] \(E\) is heat observable at some time for
    \(\partial_t u=-Hu\);
    \item[(ii)] \(E\) is heat observable at any time for
    \(\partial_t u=-Hu\);
    \item[(iii)] \(E\) satisfies LAC.
\end{enumerate}
\end{theorem}

Thus, for both the Schr\"odinger and heat dynamics on \(\mathbb Z\), the
relevant obstruction is not merely geometric density, but the local
arithmetic structure of the observable set.

Let us also mention two recent works on controllability and observability for
discrete heat equations. Bourroux, Jaming and Wang \cite{BJW26} study semi-discrete heat
equations with potentials on the full lattice \(h\mathbb Z^d\). They establish
spectral inequalities for the discrete Schr\"odinger operator
\(P_h=-\Delta_h+V\) on equidistributed sets, and then use the
Lebeau--Robbiano method together with the Hilbert Uniqueness Method to obtain
approximate null-controllability and relaxed observability estimates. Their
results cover bounded as well as polynomially growing potentials, and the
high-frequency remainder in their estimates reflects the failure of strong
unique continuation in the discrete setting. 

In another direction, M\"unch, Seifert, Stollmann and Tautenhahn \cite{MSST26} study heat
equations on weighted discrete graphs. They prove cost-uniform
\(\alpha\)-controllability from relatively dense sets by means of weak
observability estimates for the dual system. They also show that full
\(0\)-controllability cannot be expected in general. Our heat result complements and sharpens this picture
on the one-dimensional lattice. For asymptotically constant potentials,
the heat generator on \(\mathbb Z\) is a bounded self-adjoint operator, and
our observability estimate is therefore equivalent, by the usual
observability--controllability duality, to exact null-controllability of the
internally controlled heat equation. Hence exact null-controllability on
\(\mathbb Z\) is characterized by the same LAC as for Schr\"odinger observability.

For higher-dimensional lattices \(\mathbb Z^d\), \(d>1\), obtaining a
complete characterization analogous to LAC seems considerably more difficult.
In fact, the following example shows that high thickness does not imply
observability, in contrast to the one-dimensional result that
\(\frac{1}{2}\)-thickness implies observability.

\begin{example}
\label{rem:density-one-nonobservable-Z2}
$E:=\mathbb Z^2\setminus \{(m,n)\in\mathbb Z^2:m=n\}$ is \(\gamma\)-thick for every \(\gamma<1\). However, \(E\) is not
observable at any time for the free Schr\"odinger equation (that is, \(V=0\) in
\eqref{eq:DS}) ; see Proposition~\ref{prop:Z2}.
\end{example}

A comparable obstruction to a complete characterization is already present in
the Euclidean theory in higher dimensions. Indeed, while the necessity of
thickness for observable sets of the free Schr\"odinger equation on
\(\mathbb R^d\) follows from an argument which is insensitive to the dimension,
the corresponding sufficiency is much more delicate. As pointed out in
\cite[Remark~2.2(b)]{HWW22}, a resolvent-based proof of the implication
``thickness \(\Rightarrow\) observability'' in dimensions \(d\ge2\) would require
an uncertainty principle of the following form: if \(E\subset\mathbb R^d\) is
thick, then there exist constants \(C,\delta>0\) such that, for all
\(\lambda\ge1\),
\[
    \int_E |g(x)|^2\,\d x
    \ge C \int_{\mathbb R^d}|g(x)|^2\,\d x,
    \qquad
    \operatorname{spt}\mathcal Fg
    \subset
    \Bigl\{\xi\in\mathbb R^d:\bigl||\xi|-\lambda\bigr|
    \le \frac{\delta}{\lambda}\Bigr\}.
\]
This annular version of the Logvinenko--Sereda type uncertainty principle is not known in dimensions \(d\ge2\). Thus even in
the Euclidean setting, the exact characterization of observable sets for the
free Schr\"odinger equation in higher dimensions remains beyond the current
one-dimensional theory. A similar warning comes from sampling theory: after taking Fourier transform, lattice observability is closely related to sampling and uniqueness problems with prescribed frequency support, whose higher-dimensional behavior depends delicately on density, geometry and arithmetic structure; see, for instance,  \cite[Lecture~9]{OU16}.

Rather than pursuing such a
full characterization in higher dimensions, we focus on a robust class of
observable sets which is motivated by the corresponding Euclidean theory. For instance, Wang, Wang and Zhang
\cite[Remark~(a6)]{WWZ19} show that if \(E\) contains the exterior of a ball
\(B_r(0)\) for some \(r\ge0\), then \(E\) is observable at every time for the
free Schr\"odinger equation. In the lattice setting, an analogous exterior
observability phenomenon also holds: the complement of any finite set is
observable at any time. This gives the following result. 

\begin{theorem}\label{thm:Z^d}
Let \(d\ge1\), and let \(E\subset\mathbb Z^d\) be such that 
\(E^c:=\mathbb Z^d\setminus E\) is finite. Assume that \(V\in \ell^\infty(\mathbb Z^d;\mathbb R)\). Then \(E\) is observable at any
time \(T>0\).
\end{theorem}

We contrast the techniques employed in $\R^d$  with their discrete counterparts in $\Z^d$.  On $\mathbb{R}^d$, the corresponding exterior observability results \cite[Theorem~1.1 and Remark~(a6)]{WWZ19} rely crucially on quantitative Nazarov-type uncertainty principles in higher dimensions (see \cite{Jam07}). The Nazarov-type uncertainty principles state as follows. Given subsets $S,\,\Sigma\subset\R^{d}$ with $|S|, |\Sigma| < \infty$, there is a positive constant
$C(d, S, \Sigma) := C e^{C \min\{|S||\Sigma|, |S|^{1/d}w(\Sigma), |\Sigma|^{1/d}w(S)\}}$, (here  $w(\cdot)$ denotes mean width) with $C = C(d)$ such that for each $f \in L^2(\R^{d};\C)$,
\begin{equation*}
    \int_{\R_{x}^{d}} |f(x)|^2 \,\d x \le C(d, S, \Sigma) \left( \int_{\R_{x}^{d}\setminus S} |f(x)|^2 \,\d x + \int_{\R_{\xi}^{d}\setminus \Sigma} |\mathcal Ff(\xi)|^2 \,\d\xi \right).
\end{equation*}
In our lattice setting, we avoid such microlocal machinery: the proof is based on a $TT^{*}$ operator-norm argument together with specific spectral features of the discrete Laplacian. 

It is classical that on a compact Riemannian manifold, any open set satisfying
GCC
(geometric control condition) is observable at any time for the wave and
Schr\"odinger equations (see e.g. \cite{L92, P01}). Furthermore, GCC is known to be necessary on manifolds with
periodic geodesic flows (Zoll manifolds), see \cite{M11}. Related propagation,
observation and control problems have also been studied on one-dimensional
continuous networks, or metric-graph type models. In particular, D\'ager and
Zuazua \cite{DZ06} gave a systematic treatment of wave propagation,
observation and control for networks of flexible strings distributed along
planar graphs; their analysis also illustrates the role of graph topology,
edge lengths and Diophantine phenomena in observability questions. A more
recent formulation of GCC ideas on metric graphs can be found,
for instance, in \cite{ADJL25}. For finite combinatorial graphs, however,
there are no continuous geodesic flows nor microlocal ray-propagation
mechanisms; accordingly, GCC plays no role in the discrete combinatorial
setting. This shifts the problem toward
spectral and arithmetic properties of the graph Laplacian. We illustrate this
phenomenon on discrete tori.

The discrete \(d\)-dimensional tori $T_N^d:=(\Z/N\Z)^d$, $N\geq 3$ are endowed with the nearest-neighbor graph structure: $x,y\in (\Z/N\Z)^d$ are adjacent iff $y=x\pm e_j$ for some standard basis vector $e_j$, with arithmetic taken modulo $N$. On these finite graphs, we restrict ourselves to the free Schr\"odinger
equation, in order to isolate the arithmetic effects coming from the
spectrum of the discrete Laplacian.

We give an arithmetic characterization of observable sets on \(T_N^1\).
We also construct, in every dimension, positive-density subsets of \(T_N^d\)
which are not observable at any time.

\begin{theorem}\label{thm:tori}
Consider the free Schr\"odinger equation on the discrete torus \(T_N^d\). Then the following hold.
\begin{enumerate}
    \item[(i)] In dimension \(d=1\), a non-empty set
    \(E\subset \mathbb Z/N\mathbb Z\) is observable at any time if and only if
    \[
        \gcd\left\{
            \frac{N}{\gcd(N,2)},\ x-y:\ x,y\in E
        \right\}=1 .
    \]

    \item[(ii)] For every \(d\ge 1\), there exist \(c>0\) and an infinite
    sequence \(N_m\to\infty\) such that, for each \(m\), there is a set
    \(E_{N_m}\subset(\mathbb Z/N_m\mathbb Z)^d\) satisfying
    \[
        |E_{N_m}|\ge c\,N_m^d,
    \]
    but \(E_{N_m}\) is not observable at any time.
\end{enumerate}
\end{theorem}

 We remark that on continuous tori $\mathbb{T}^d$, every non-empty open set is observable at any time (see, e.g. \cite{AM14, BBZ13, BZ12, Har89, Jaf90, Kom92, Tao21}). Furthermore, every positive-measure set on $\mathbb{T}^2$ is observable at any time \cite{BZ19}, and a key ingredient in the proof is the construction of semiclassical defect measures to enforce mass concentration along rational directions. In contrast, Theorem \ref{thm:tori} shows that due to arithmetic structure effects, some discrete tori contain high-density subsets that remain not observable at any time as the number of vertices tends to infinity.

Finally, beyond flat Euclidean and toroidal geometries, another important
source of observability phenomena comes from negatively curved spaces. We mention that there are also studies of observability for Schr\"odinger equations on negatively curved manifolds, see for instance \cite{AnRi12, DJ18, DJN22, Jin18}. We also highlight the quantitative Hautus-type resolvent estimates established in \cite{DJ18, DJN22}. A central idea in those works is the fractal uncertainty principle, introduced in \cite{DZ16} and recently extended to higher dimensions in \cite{Coh25}. As a discrete counterpart of negatively curved spaces, regular trees also provide
a natural testing ground for observability phenomena. It would be interesting
to obtain an intrinsic characterization of observable sets on tree graphs,
where the exponential volume growth and boundary structure may play a role
analogous to the hyperbolic dynamics in the manifold setting.

\begin{figure}[htbp]
\centering
\resizebox{0.92\textwidth}{!}{%
\begin{tikzpicture}[
    x=1cm,y=1cm,
    >=Stealth,
    box/.style={
        draw,
        rectangle,
        line width=0.4pt,
        align=center,
        inner sep=4pt,
        minimum height=8mm,
        text width=#1,
        font=\scriptsize\itshape
    },
    box/.default=32mm,
    arrow/.style={->, line width=0.4pt},
    lab/.style={
        font=\scriptsize\itshape,
        fill=white,
        inner sep=1pt
    }
]

\node[box=50mm] (B) at (0,-1.2)
{observable sets for \\discrete Schr\"odinger equation};

\node[box=15mm] (C) at (-5.4,-3.1)
{$\mathbb Z$};

\node[box=15mm] (T) at (0,-3.1)
{$T_N^d$};

\node[box=24mm] (J) at (5.4,-3.1)
{$\mathbb Z^d,\ d>1$};

\node[box=15mm] (D) at (-5.4,-4.9)
{LAC};

\node[box=38mm] (E) at (-5.4,-6.7)
{Hautus criterion\\
recurrence structure};

\node[box=25mm] (F) at (-7.6,-8.9)
{observable at\\
some time $=$ any time};

\node[box=32mm] (G) at (-3.2,-8.9)
{heat observability\\
same LAC criterion};

\node[box=38mm] (U) at (0,-5.3)
{positive density does\\
not ensure observability};

\node[box=30mm] (K1) at (3.2,-7.0)
{Example \ref{rem:density-one-nonobservable-Z2}};

\node[box=36mm] (K2) at (7.6,-7.0)
{exterior observability};

\draw[arrow] (B) -- (C);
\draw[arrow] (B) -- (T);
\draw[arrow] (B) -- (J);
\draw[arrow] (C) -- (D);
\draw[arrow] (D) -- (E);
\draw[arrow] (E) -- (D);
\draw[arrow] (F) -- (E);
\draw[arrow] (E) -- (G);
\draw[arrow] (T) -- (U);
\draw[arrow] (J) -- node[lab,above left] {} (K1);
\draw[arrow] (J) -- node[lab,above right] {} (K2);

\end{tikzpicture}%
}
\caption{Logical structure of the main results.}
\label{fig:main-structure}
\end{figure}

The rest of the paper is organized as follows. In Section~\ref{sec:2}, we prove the
characterization of observable sets on \(\mathbb Z\) for both Schr\"odinger
and heat equations, and establish exterior observability on higher-dimensional
lattices. In Section~\ref{sec:3}, we study finite graphs and discrete tori, proving the
arithmetic characterization in one dimension and constructing positive-density
non-observable sets in higher dimensions. Appendix \ref{app:LS-half-thick} gives an alternative
proof of the half-thickness result on \(\mathbb Z\) via a discrete
Logvinenko--Sereda argument. Figure \ref{fig:main-structure} exhibits the logical structure of the main results.

\section{Observable sets on lattices}\label{sec:2}

\subsection{Proof of Theorem~\ref{thm:1dZ}}
\label{sec:proof-thm-1dZ}

Below we set
\[
    W(n):=V(n)-c,\quad H_0:=-\Delta_{\rm disc}+c .
\]
Since \(W(n)\to 0\) as \(|n|\to \infty\), the multiplication operator by \(W\) is compact on \(\ell^2(\Z)\). Indeed, if we denote
\[
    W_R(n):=W(n)\cdot\1_{\{|n|\le R\}},
\]
then multiplication by \(W_R\) has finite rank and
\[
    \|W-W_R\|_{\mathcal L(\ell^2(\mathbb Z))}=\sup_{|n|>R}|W(n)|\to 0\quad{\rm as}\;R\to \infty.
\]

We first prove that observability at some time is equivalent to LAC. We begin with the following uniform local estimate.
\begin{lemma}\label{lem:local-estimate-H0-c}
Fix \(L\in\N_+\) and \(K>0\). Then there exists a constant
\(C_{L,K,c}>0\) such that for every
\[
    S\subset[-L,L]\cap\Z\quad{\rm with}\quad\mathfrak g(S)=1,
\]
every \(\lambda\in[-K,K]\), and every vector
\[
    v=(v_n)_{n=-L-1}^{L+1}\in\C^{2L+3},
\]
one has
\begin{equation}\label{eq:local-estimate-H0-c}
    \sum_{n=-L}^{L}|v_n|^2\le C_{L,K,c}\left[\sum_{n=-L}^{L}|-v_{n+1}-v_{n-1}+(2+c-\lambda)v_n|^2+\sum_{s\in S}|v_s|^2\right].
\end{equation}
\end{lemma}

\begin{proof}
Suppose that the estimate fails. Then there exist \(S_j\subset[-L,L]\cap\Z\) with \(\mathfrak g(S_j)=1\), \(\lambda_j\in[-K,K]\), and \(v^{(j)}=(v_n^{(j)})_{n=-L-1}^{L+1}\in\C^{2L+3}\) such that
\[
    \sum_{n=-L}^{L}|v_n^{(j)}|^2=1,
\]
whereas
\[
    \sum_{n=-L}^{L}|-v_{n+1}^{(j)}-v_{n-1}^{(j)}+(2+c-\lambda_j)v_n^{(j)}|^2+\sum_{s\in S_j}|v_s^{(j)}|^2\to 0\quad{\rm as}\;j\to\infty.
\]
Since there are only finitely many subsets of \([-L,L]\cap\Z\), and since \([-K,K]\) is compact, after passing to a subsequence we may assume that
\[
    S_j=S,\quad\lambda_j\to\lambda_\infty\in[-K,K].
\]
The coordinates \(v_n^{(j)}\), \(-L-1\le n\le L+1\), are uniformly bounded. Thus, after passing to a further subsequence, we deduce \(v^{(j)}\to v\) for some \(v\in \C^{2L+3}\). The limit satisfies
\begin{gather}
   \sum_{n=-L}^{L}|v_n|^2=1, \label{eq:finitesumvn}
   \end{gather}
and \begin{gather}
   -v_{n+1}-v_{n-1}+(2+c-\lambda_\infty)v_n=0
   \quad (-L\le n\le L),
   \qquad
   v_s=0\quad (s\in S). \notag
\end{gather}
For simplicity we denote \(a:=2+c-\lambda_\infty\). 
The recurrence extends uniquely to a solution on the whole lattice. We show that such a solution must be zero if it vanishes on \(S\).

If \(|a|<2\), then \(a=2\cos\theta\) for some \(\theta\in(0,\pi)\), and
\[
    v_n=Ae^{in\theta}+Be^{-in\theta}.
\]
Suppose the solution is nonzero and vanishes on \(S\). Then we derive \(A,B\ne 0\), and for every \(s\in S\),
\[
    Ae^{is\theta}+Be^{-is\theta}=0.
\]
Hence for all \(s,s'\in S\),
\[
    e^{2i(s-s')\theta}=1.
\]
Since we have assumed \(\mathfrak g(S)=1\), the differences \(s-s'\) generate \(\Z\), and therefore \(e^{2i\theta}=1\). This contradicts \(\theta\in(0,\pi)\).

When \(a\in\{\pm 2\}\), we get that
\[
    v_{n}=({\rm sgn}\,a)^{n}\cdot(A+Bn).
\]
Using the assumption \(\mathfrak g(S)=1\) we note that the set \(S\) contains at least two distinct integers, and this implies that \(A=B=0\).

Finally, if \(|a|>2\), then there exist \(r>1\) and \(\sigma\in\{\pm 1\}\) such that
\[
    a=\sigma(r+r^{-1}),
\]
and every solution has the form
\[
    v_n=\sigma^n(Ar^n+Br^{-n}).
\]
If this solution vanishes at two distinct points \(s_1,s_2\in S\), then a nonzero solution would imply
\[
    r^{2(s_1-s_2)}=1,
\]
which is impossible because \(r>1\) and \(s_1\ne s_2\). Thus again \(A=B=0\).

Therefore the only solution of the recurrence which vanishes on \(S\) is the zero solution. This contradicts \eqref{eq:finitesumvn}. The lemma follows.
\end{proof}

Before proving the resolvent estimate, we exclude a possible spectral obstruction. The following lemma says that, under LAC, no nonzero eigenfunction of \(H\) can be completely invisible from the observation set \(E\). Set
\[
    \mathcal I_E:=\{\psi\in\ell^2(\Z): \psi(n)=0\;{\rm for\;every}\;n\in E\}.
\]
Thus \(\mathcal I_E\) is the closed subspace of \(\ell^2(\Z)\) consisting of functions which vanish on the observation set.

\begin{lemma}\label{lem:no-hidden-eigenfunctions-asymptotic-c}
Assume that \(E\subset\mathbb Z\) satisfies LAC. Then
\begin{equation}\label{eq:no-hidden-eigenfunctions-asymptotic-c}
    \ker(H-\lambda)\cap \mathcal I_E=\{0\},\quad \forall\lambda\in\R.
\end{equation}
\end{lemma}

\begin{proof}
Let \(L\in\N_+\) be such that
\[
    \mathfrak g(E\cap[m-L,m+L])=1\quad{\rm for\;all}\;m\in\Z.
\]
Fix a choice \(\psi\in\ker(H-\lambda)\cap \mathcal I_E\). By the definition of \(\mathcal I_E\), we have \(\psi|_E=0\). If \(\psi\ne0\), then necessarily $|\lambda|\le \|H\|_{\mathcal L(\ell^{2}(\Z))}$. Choose \(K:=\|H\|_{\mathcal L(\ell^{2}(\Z))}+1\). Let \(C_{L,K,c}\) be the constant in
Lemma~\ref{lem:local-estimate-H0-c}. Since \(W(n)\to 0\) as \(|n|\to \infty\), there exists
\(R>0\) such that
\begin{equation}\label{eq:W-small-tail}
    |W(n)|^2\le\frac{1}{2C_{L,K,c}}\quad{\rm when}\;|n|\ge R.
\end{equation}
Choose 
\[
    m\in\left\{i:[i-L,i+L]\subset\{n\in\Z: |n|\ge R\}\right\}.
\]
Since \(\mathfrak g((E-m)\cap[-L,L])=1\), applying Lemma~\ref{lem:local-estimate-H0-c} to the sequence \((\psi(m+\cdot))_{-L-1}^{L+1}\in\C^{2L+3}\) while using the facts
\[
    (H_0-\lambda)\psi=-W\psi\quad{\rm and}\quad \psi(m+s)=0\quad {\rm for\;any}\;s\in (E-m)\cap[-L,L],
\]
we deduce that
\[
    \sum_{n=-L}^{L}|\psi(m+n)|^2\le C_{L,K,c}\sum_{n=-L}^{L}|W(m+n)|^2|\psi(m+n)|^2.
\]
Using \eqref{eq:W-small-tail}, we get
\[
    \sum_{n=-L}^{L}|\psi(m+n)|^2\le\frac{1}{2}\sum_{n=-L}^{L}|\psi(m+n)|^2.
\]
Thus we have \(\psi(n)\equiv 0\) for \(m-L\le n\le m+L\). Since this argument holds for every \(m\in\Z\) whose \(L\)-neighborhood lies in \(\{|n|\ge R\}\), we show \(\psi\) has finite support.

In fact, there is no nonzero finitely supported solution \(\psi\) satisfying \(H\psi=\lambda\psi\). To see this, if \(\psi\ne 0\), let \(N\) be the largest integer such that \(\psi(N)\ne0\). Evaluating \(H\psi=\lambda\psi\) at \(N+1\), we get
\[
    -\psi(N+2)-\psi(N)+(2+V(N+1))\psi(N+1)=\lambda\psi(N+1).
\]
All terms vanish except \(-\psi(N)\). Hence \(\psi(N)=0\), a contradiction. Therefore \(\psi=0\).
\end{proof}

Conversely, when LAC fails, the arithmetic defect can be used to construct
states which are invisible on \(E\) and which almost solve the free equation
generated by \(H_0\). The additional \(\ell^\infty\)-smallness will later
allow us to pass from \(H_0\) to the compactly perturbed operator
\(H=H_0+W\).
\begin{lemma}[Arithmetic quasimodes when LAC fails]
\label{lem:arithmetic-quasimodes-H0}
If \(E\) does not satisfy LAC, then there exist \(f_N\in\ell^2(\Z)\) and \(\mu_N\in[c,c+4]\) such that
\begin{equation}\label{eq:arithmetic-quasimodes-H0}
    \|f_N\|_{\ell^2(\Z)}=1,\quad f_N|_E=0,\quad{\rm and}\quad\|(H_0-\mu_N)f_N\|_{\ell^2(\Z)}\to 0,\;\|f_N\|_{\ell^\infty(\Z)}\to 0\quad{\rm as}\;N\to\infty.
\end{equation}
\end{lemma}

\begin{proof}
Since LAC fails, for every \(N\in\mathbb N\) there exists
\(m_N\in\mathbb Z\) such that
\[
    \mathfrak g(E\cap I_N)\ne 1\quad{\rm where}\;I_N:=[m_N-N,m_N+N]\cap\mathbb Z.
\]
We distinguish three cases.

\medskip

\noindent
\textbf{Case 1: \(E\cap I_N=\emptyset\).}
Taking \(f_{N}=\1_{I_{N}}\) and \(\mu_{N}=c\), we have \(f_N|_E=0\) and \(\|f_{N}\|_{\ell^{2}(\Z)}^{2}=2N+1\). Since \((H_0-\mu_N)f_N=-\Delta_{\rm disc}f_N\) is supported near the two endpoints of \(I_N\), we have
\[
    \|(H_0-\mu_N)f_N\|_{\ell^2(\Z)}\le O(1).
\]

\medskip

\noindent
\textbf{Case 2: \(E\cap I_N=\{r_N\}\).}
Take \(f_N(n)=(n-r_N)\cdot\1_{I_N}(n)\) and \(\mu_N=c\), then \(f_N|_E=0\), and, moreover,
\[
    \|f_N\|_{\ell^2(\Z)}^2=\sum_{n=m_N-N}^{m_N+N}|n-r_N|^2\ge c_0 N^3
\]
for some absolute constant \(c_0>0\). Since \(n\mapsto n-r_N\) is harmonic for \(\Delta_{\rm disc}\), the residual
\[
    (H_0-\mu_N)f_N=-\Delta_{\rm disc}f_N
\]
is supported near the endpoints and has size \(O(N)\). Thus \(\|(H_0-\mu_N)f_N\|_{\ell^2(\Z)}\le C N\) and \(\|f_N\|_{\ell^\infty(\Z)}\le C N\) for some absolute constant \(C>0\).

\medskip

\noindent
\textbf{Case 3: \(|E\cap I_N|\ge2\).}
In this case, we have \(2\le\mathfrak g(E\cap I_N)\le 2N\). Choose \(r_N\in E\cap I_N\). Then
\[
    E\cap I_N\subset r_N+\mathfrak g(E\cap I_N)\Z.
\]
Below we denote
\[
    \theta_N:=\frac{\pi}{\mathfrak g(E\cap I_N)}\quad{\rm and}\quad\mu_N:=c+2-2\cos\theta_N,
\]
and define
\[
    f_N(n):=\1_{I_N}(n)\sin\big(\theta_N(n-r_N)\big).
\]
Then \(\mu_N\in[c,c+4]\) and \(f_N|_E=0\). Note that 
\[
    (H_0-\mu_N)\sin\big(\theta_N(n-r_N)\big)=0\quad{\rm on}\;\Z,
\]
and then \((H_0-\mu_N)f_N\) is supported only near the two endpoints of
\(I_N\). Moreover, we derive 
\[
    \|(H_0-\mu_N)f_N\|_{\ell^2}\le O(1).
\]
To estimate the lower bound of \(\|f_N\|_{\ell^2}\), we consider the sequence
\[
    k\mapsto \sin^2\left(\frac{\pi k}{\mathfrak g(E\cap I_N)}\right),
\]
which is \(\mathfrak g(E\cap I_N)\)-periodic, and satisfies
\[
    \sum_{k=0}^{\mathfrak g(E\cap I_N)-1}\sin^2\left(\frac{\pi k}{\mathfrak g(E\cap I_N)}\right)=\frac{\mathfrak g(E\cap I_N)}{2}.
\]
Every interval of \(2N+1\) consecutive integers contains
\(\left\lfloor\frac{2N+1}{\mathfrak g(E\cap I_N)}\right\rfloor\) disjoint complete periods. Thus
\[
    \|f_N\|_{\ell^2(\Z)}^2\ge\left\lfloor\frac{2N+1}{\mathfrak g(E\cap I_N)}\right\rfloor\cdot\frac{\mathfrak g(E\cap I_N)}{2}.
\]
Since \(\mathfrak g(E\cap I_N)\le2N\), we have
\[
    \left\lfloor\frac{2N+1}{\mathfrak g(E\cap I_N)}\right\rfloor\cdot\frac{\mathfrak g(E\cap I_N)}{2}\geq\frac{2N+1}{4}\implies\|f_N\|_{\ell^2(\Z)}^2\ge\frac{2N+1}{4}.
\]
A trivial estimate shows that
\[
    \|f_N\|_{\ell^\infty(\Z)}\le 1.
\]

\medskip

In all three cases define
\[
    g_N:=\frac{f_N}{\|f_N\|_{\ell^2(\Z)}}.
\]
Then
\[
    \|g_N\|_{\ell^2(\Z)}=1\quad{\rm and}\quad g_N|_E=0,
\]
and the above estimates imply
\[
    \max\left\{\|(H_0-\mu_N)g_N\|_{\ell^2(\Z)},\|g_N\|_{\ell^\infty(\Z)}\right\}\leq\frac{C}{\sqrt N}\to 0\quad{\rm as}\;N\to\infty.
\]
The lemma is proved.
\end{proof}

Recall that, in the functional framework from Miller~\cite{M05}, he considered
the observability of the following system:
\begin{equation}\label{eq:Ob system}
    \dot{x}(t)-i\mathscr{A}x(t)=0,\qquad
    x(0)=x_{0}\in X,\qquad
    y(t)=\mathscr{C}x(t) \in Y,
\end{equation}
where \(X\) and \(Y\) are Hilbert spaces,
\(\mathscr{A}:D(\mathscr{A})\subset X\to X\) is a self-adjoint operator, and
\(\mathscr C\in\mathcal L(D(\mathscr A),Y)\) is an admissible observation
operator.  Miller proved that
\begin{theorem}\cite[Theorem 5.1]{M05}\label{thm:M05}
The system \eqref{eq:Ob system} is exactly observable, i.e., the observable inequality holds at some $T>0$:
\begin{equation}\label{ineq:Ob exactly ob}
    \forall x_{0}\in X,\;\norm{x_{0}}^{2}_{X}\leq \kappa_{T}\int_{\left(0,T\right)}\norm{y}^{2}_{Y}\d t,
\end{equation}
if and only if,  there exist $m, M>0$ such that the following {\it observable resolvent estimate} holds:
\begin{equation}\label{ineq:Ob resolvent estimate}
    \forall\;x\in D\left(\mathscr{A}\right),\forall\lambda\in\R,\;\;\norm{x}^{2}_{X}\leq M\norm{\left(\mathscr{A}-\lambda\right)x}^{2}_{X}+m\norm{\mathscr{C}x}^{2}_{Y}.
\end{equation}
Moreover, for all $\varepsilon>0$, there is a $C_{\varepsilon}>0$ such that \eqref{ineq:Ob resolvent estimate} implies \eqref{ineq:Ob exactly ob} for all $T>\sqrt{M\left(\pi^{2}+\varepsilon\right)}$ with $\kappa_{T}=C_{\varepsilon}mT/(T^{2}-M\left(\pi^{2}+\varepsilon\right))$. 
\end{theorem}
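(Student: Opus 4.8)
The plan is to prove the equivalence in two separate implications, and to read off the quantitative bound on $T$ and $\kappa_T$ directly from the proof of the harder one. Throughout I will use that self-adjointness of $\mathscr{A}$ makes $e^{it\mathscr{A}}$ a strongly continuous unitary group and puts the Borel functional calculus $\psi\mapsto\psi(\mathscr{A})$ at our disposal; I will also use that $\mathscr{C}$ is bounded (as in our application, where it is the restriction to $E$), the admissible case requiring only an extra admissibility estimate in the same scheme. For the easy direction (observability $\Rightarrow$ resolvent estimate) I would fix $x\in D(\mathscr{A})$ and $\lambda\in\R$ and compare the trajectory $e^{it\mathscr{A}}x$ with the frozen-frequency curve $e^{i\lambda t}x$: writing $y(t):=e^{it\mathscr{A}}x-e^{i\lambda t}x$, the spectral identity $\|y(t)\|^2=\int 4\sin^2\!\big(\tfrac t2(\mu-\lambda)\big)\,d\mu_x(\mu)$ (with $\mu_x$ the spectral measure of $x$) gives $\|y(t)\|\le|t|\,\|(\mathscr{A}-\lambda)x\|$. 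Substituting $e^{it\mathscr{A}}x=e^{i\lambda t}x+y(t)$ into \eqref{ineq:Ob exactly ob}, using $\|\mathscr{C}e^{i\lambda t}x\|=\|\mathscr{C}x\|$ and boundedness of $\mathscr{C}$, yields \eqref{ineq:Ob resolvent estimate} with $m=2\kappa_{T_0}T_0$ and $M=\tfrac23\kappa_{T_0}\|\mathscr{C}\|^2T_0^3$.

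The substance is the converse, and the idea is to resolve the single-frequency content of \eqref{ineq:Ob resolvent estimate} against the whole evolution via a Fourier transform in time coupled with the functional calculus. Fix $T>0$ and a cutoff $\chi\in C_c^\infty((0,T))$ with $0\le\chi\le1$, to be optimized later. For $x_0\in X$ put $g(t):=\chi(t)\,\mathscr{C}e^{it\mathscr{A}}x_0$, extended by zero, so that $\|g\|_{L^2(\R;X)}^2\le\int_0^T\|\mathscr{C}e^{it\mathscr{A}}x_0\|^2\,dt$. Pulling $\mathscr{C}$ out and using the functional calculus, the time–Fourier transform is $\widehat g(\tau)=\mathscr{C}\,\Phi(\mathscr{A}-\tau)x_0$, where $\Phi(s):=\int_\R\chi(t)e^{ist}\,dt$ is a Schwartz function; in particular $\Phi(\mathscr{A}-\tau)x_0$ is a smooth vector, so \eqref{ineq:Ob resolvent estimate} applies to it at $\lambda=\tau$:
\[
\|\mathscr{C}\Phi(\mathscr{A}-\tau)x_0\|^2\ \ge\ \tfrac1m\|\Phi(\mathscr{A}-\tau)x_0\|^2-\tfrac Mm\|(\mathscr{A}-\tau)\Phi(\mathscr{A}-\tau)x_0\|^2 .
\]
Integrating in $\tau\in\R$, applying Plancherel in $\tau$, and using Tonelli to exchange the $\tau$-integral with the spectral measure $\mu_{x_0}$, the two right-hand integrals collapse to $\int_\R\|\Phi(\mathscr{A}-\tau)x_0\|^2\,d\tau=2\pi\|\chi\|_{L^2(0,T)}^2\|x_0\|^2$ and $\int_\R\|(\mathscr{A}-\tau)\Phi(\mathscr{A}-\tau)x_0\|^2\,d\tau=2\pi\|\chi'\|_{L^2(0,T)}^2\|x_0\|^2$ (the latter via one integration by parts in the definition of $\Phi$). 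Collecting terms gives the clean coercivity estimate
\[
\int_0^T\|\mathscr{C}e^{it\mathscr{A}}x_0\|^2\,dt\ \ge\ \frac{1}{m}\Big(\|\chi\|_{L^2(0,T)}^2-M\,\|\chi'\|_{L^2(0,T)}^2\Big)\,\|x_0\|^2 .
\]

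This is exactly \eqref{ineq:Ob exactly ob} as soon as the bracket is positive, i.e. $\|\chi'\|_{L^2(0,T)}^2<\tfrac1M\|\chi\|_{L^2(0,T)}^2$; since the smallest value of $\|\chi'\|^2/\|\chi\|^2$ over $0\neq\chi\in H_0^1(0,T)$ is the first Dirichlet eigenvalue $\pi^2/T^2$ (attained by $\sin(\pi t/T)$), for any $\epsilon>0$ and any $T$ with $T^2>M(\pi^2+\epsilon)$ one may pick $\chi\in C_c^\infty((0,T))$ with ratio below $(\pi^2+\epsilon)/T^2$; normalizing $\chi$ and tracking the constant then produces $\kappa_T=C_\epsilon\,mT/(T^2-M(\pi^2+\epsilon))$, as asserted. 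I expect the main obstacle to be precisely this converse direction: making the Fourier-in-time/functional-calculus dictionary rigorous — the identity $\widehat g(\tau)=\mathscr{C}\Phi(\mathscr{A}-\tau)x_0$, the identification of the Bochner integral with the spectral calculus, and the Tonelli exchange with $\mu_{x_0}$ — and then extracting the \emph{sharp} constant $\pi^2$ from the variational problem for $\chi$ without loss (a spectral-block decomposition combined with Ingham-type inequalities would only yield the threshold $T>2\pi\sqrt M$, whereas the Fourier argument above recovers the optimal $\pi\sqrt M$). The easy direction is then routine given the Duhamel comparison bound $\|y(t)\|\le|t|\|(\mathscr{A}-\lambda)x\|$.
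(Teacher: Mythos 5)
The paper does not prove this statement at all: it is quoted verbatim from Miller \cite[Theorem 5.1]{M05} and used as a black box, so there is no internal proof to compare against. Your argument is correct and is essentially Miller's original one — the easy direction by comparing $e^{it\mathscr{A}}x$ with the frozen trajectory $e^{i\lambda t}x$ (valid here since $\mathscr{C}=\1_E\cdot\mathrm{Id}$ is bounded, as you note), and the main direction by the time-Fourier/functional-calculus trick with a cutoff $\chi\in C_c^\infty((0,T))$, Plancherel, and optimization of $\|\chi'\|_{L^2}^2/\|\chi\|_{L^2}^2$ toward the first Dirichlet eigenvalue $\pi^2/T^2$, which indeed reproduces the threshold $T>\sqrt{M(\pi^2+\epsilon)}$ and the stated form of $\kappa_T$ after the scaling $\chi(t)=\chi_1(t/T)$.
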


In our one-dimensional lattice setting, we apply this theorem with
\begin{align}\label{eq:setting}
 X=Y=\ell^2(\Z),\quad\mathscr A=-H\quad{\rm and}\quad\mathscr C=\1_E\cdot{\rm Id}_{\ell^2(\Z)},
\end{align}

Now we use Theorem \ref{thm:M05} to prove the following proposition.
\begin{proposition}
\label{prop:some-time-LAC-asymptotic-c}
Let $G=\Z$, $E\subset\mathbb{Z}$ and $V(n) \to c \in \R, \mathrm{as} \ |n| \to \infty$. Then the following are equivalent:
\begin{enumerate}
    \item[(i)] \(E\) is observable at some time for \eqref{eq:DS};
    \item[(ii)] \(E\) satisfies LAC.
\end{enumerate}
\end{proposition}

\begin{proof}
By Theorem~\ref{thm:M05}, applied with \eqref{eq:setting}
observability at some time is equivalent to the existence of constants
\(M,m>0\) such that
\begin{equation}\label{eq:H-Hautus-asymptotic-c}
    \|f\|_{\ell^2(\Z)}^2\le M\|(H-\lambda)f\|_{\ell^2(\Z)}^2+m\|f\|_{\ell^2(E)}^2
\end{equation}
for all \(\lambda\in\R\) and all \(f\in \ell^2(\Z)\).

We first prove \((i)\Rightarrow(ii)\). Assume that
\eqref{eq:H-Hautus-asymptotic-c} holds. Suppose, by contradiction, that
LAC fails. By Lemma~\ref{lem:arithmetic-quasimodes-H0}, there exist
\(f_N\in\ell^2(\Z)\) and \(\mu_N\in[c,c+4]\) such that
\[
    \|f_N\|_{\ell^2(\Z)}=1,\quad f_N|_E=0,\quad{\rm and}\quad\|(H_0-\mu_N)f_N\|_{\ell^2(\Z)}\to 0,\;\|f_N\|_{\ell^\infty(\Z)}\to 0\quad{\rm as}\;N\to\infty.
\]
We claim that
\[
    \|Wf_N\|_{\ell^2(\Z)}\to 0.
\]
Indeed, fix \(\varepsilon>0\). Choose \(R>0\) such that
\[
    \sup_{|n|>R}|W(n)|\le\varepsilon.
\]
Then
\[
\begin{aligned}
    \|Wf_N\|_{\ell^2(\Z)}^2&\le\|W\|_{\ell^\infty(\Z)}^2\sum_{|n|\le R}|f_N(n)|^2+\varepsilon^2\sum_{|n|>R}|f_N(n)|^2\\
    &\le\|W\|_{\ell^\infty(\Z)}^2(2R+1)\|f_N\|_{\ell^\infty(\Z)}^2+\varepsilon^2.
\end{aligned}
\]
Letting \(N\to\infty\) gives
\[
    \limsup_{N\to\infty}\|Wf_N\|_{\ell^2(\Z)}^2\le\varepsilon^2.
\]
Since \(\varepsilon>0\) is arbitrary,
\[
    \|Wf_N\|_{\ell^2(\Z)}\to0.
\]
Consequently,
\[
    \|(H-\mu_N)f_N\|_{\ell^2(\Z)}\le\|(H_0-\mu_N)f_N\|_{\ell^2(\Z)}+\|Wf_N\|_{\ell^2(\Z)}\to 0.
\]
Since \(f_N|_E=0\), applying \eqref{eq:H-Hautus-asymptotic-c} to the pairs \((f_N,\mu_N)\) gives
\[
    1=\|f_N\|_{\ell^2(\Z)}^2\le M\|(H-\mu_N)f_N\|_{\ell^2(\Z)}^2\to 0,
\]
a contradiction. Hence LAC holds.

We now prove \((ii)\Rightarrow(i)\). Assume that \(E\) satisfies LAC.
We prove the Hautus estimate \eqref{eq:H-Hautus-asymptotic-c}. Suppose
that it fails. Then there exist \(f_j\in\ell^2(\Z)\) and
\(\lambda_j\in\R\) such that
\begin{equation}\label{eq:bad-sequence-H-asymptotic-c}
    \|f_j\|_{\ell^2(\Z)}=1,\quad{\rm and}\quad\|(H-\lambda_j)f_j\|_{\ell^2(\Z)}\to0,
    \;\|f_j\|_{\ell^2(E)}\to 0\quad{\rm as}\;j\to\infty.
\end{equation}
The sequence \((\lambda_j)\) is bounded. Indeed,
\[
    |\lambda_j|=\|\lambda_j f_j\|_{\ell^2(\Z)}\le\|Hf_j\|_{\ell^2(\Z)}+\|(H-\lambda_j)f_j\|_{\ell^2(\Z)}\le\|H\|_{\mathcal L(\ell^2(\Z))}+\|(H-\lambda_j)f_j\|_{\ell^2(\Z)}.
\]
After passing to a subsequence, we may assume that
\[
    \lambda_j\to\lambda\in\R,\quad f_j\rightharpoonup f\quad{\rm weakly\;in}\;\ell^2(\Z).
\]
In fact, \(f=0\) and we will prove it by showing a contradiction. If \(f\ne0\), then
\[
    (H-\lambda)f_j=(H-\lambda_j)f_j+(\lambda_j-\lambda)f_j\to 0 \quad{\rm strongly\;in}\;\ell^2(\Z).
\]
Passing to the weak limit gives
\[
    (H-\lambda)f=0.
\]
Moreover, \(\|f_j\|_{\ell^2(E)}\to0\) implies \(f|_E=0\). Hence
\[
    0\ne f\in\ker(H-\lambda)\cap\mathcal I_E,
\]
contradicting Lemma~\ref{lem:no-hidden-eigenfunctions-asymptotic-c}.
Therefore \(f=0\).

Since multiplication by \(W\) is compact and \(f_j\rightharpoonup0\), we
have \(\|Wf_j\|_{\ell^2(\Z)}\to 0\). Write
\[
    (H_0-\lambda_j)f_j=(H-\lambda_j)f_j-Wf_j.
\]
Combining \eqref{eq:bad-sequence-H-asymptotic-c}, we deduce
\begin{equation}\label{eq:H0-residual-small-asymptotic-c}
    \|(H_0-\lambda_j)f_j\|_{\ell^2(\Z)}\to 0.
\end{equation}

Choose \(K>\|H\|_{\L(\ell^2(\Z))}+1\). After discarding finitely many terms, we may assume that \(\lambda_{j}\in[-K,K]\) for any \(j\). Let \(L\in\N_+\) be the LAC length for \(E\), and let
\(C_{L,K,c}\) be the constant from
Lemma~\ref{lem:local-estimate-H0-c}. For each \(m\in\mathbb Z\), we have \(\mathfrak g((E-m)\cap[-L,L])=1\). Applying Lemma~\ref{lem:local-estimate-H0-c} to the sequence \((f_{j}(m+\cdot))_{-L-1}^{L+1}\in\C^{2L+3}\), one has
\[
    \sum_{n=m-L}^{m+L}|f_j(n)|^2\le C_{L,K,c}\left[\sum_{n=m-L}^{m+L}|(H_0-\lambda_j)f_j(n)|^2+\sum_{s\in E\cap[m-L,m+L]}|f_j(s)|^2\right].
\]
Summing over all \(m\in\Z\), and using that every lattice point is
counted exactly \(2L+1\) times, we obtain
\[
    \|f_j\|_{\ell^2(\Z)}^2\le C_{L,K,c}\|(H_0-\lambda_j)f_j\|_{\ell^2(\Z)}^2+C_{L,K,c}
    \|f_j\|_{\ell^2(E)}^2.
\]
By \eqref{eq:H0-residual-small-asymptotic-c} and
\eqref{eq:bad-sequence-H-asymptotic-c}, the right-hand side tends to
\(0\). This contradicts \(\|f_j\|_{\ell^2(\Z)}=1\) for any \(j\). Thus \eqref{eq:H-Hautus-asymptotic-c} holds. By Theorem~\ref{thm:M05},
\(E\) is observable at some time.
\end{proof}

Here and below, \(\mathcal L(X,Y)\) denotes the space of bounded linear
operators from \(X\) to \(Y\), and we write \(\mathcal L(X):=\mathcal L(X,X)\).

\begin{lemma}
\label{lem:some-time-any-time-bounded-asymptotic-c}
Let \(X,Y\) be Hilbert spaces. Let \(\mathscr A\in\mathcal L(X)\) be self-adjoint
and let \(\mathscr C\in\mathcal L(X,Y)\). Set
\[
    U(t):=e^{-it\mathscr A},\quad t\in\R.
\]
Assume that there exist \(T_0>0\) and \(c_0>0\) such that
\[
    \int_0^{T_0}\|\mathscr CU(t)x\|_Y^2\,\d t\ge c_0\|x\|_X^2,\quad{\rm for\;any}\;x\in X.
\]
Then for every \(T>0\) there exists \(c_T>0\) such that
\[
    \int_0^T\|\mathscr CU(t)x\|_Y^2\,\d t\ge c_T\|x\|_X^2,\quad{\rm for\;any}\;x\in X.
\]
\end{lemma}

\begin{proof}
If \(T\ge T_0\), the conclusion is immediate. We assume
\(0<T<T_0\) and argue by contradiction. Suppose that the conclusion fails
for this \(T\). Then there exists \(x_j\in X\) such that
\[
    \|x_j\|_X=1\quad{\rm and}\quad\int_0^T\|\mathscr CU(t)x_j\|_Y^2\,\d t\to0.
\]
For simplicity we denote 
\[
    Q:=\mathscr C^*\mathscr C\quad{\rm and}\quad Q(t):=U(-t)QU(t),
\]
and define
\[
    q_j(t):=\langle Q(t)x_j,x_j\rangle_X=\|\mathscr CU(t)x_j\|_Y^2,\quad 0\le t\le T_0.
\]
Since \(\mathscr A\) is bounded, \(t\mapsto Q(t)\) is \(C^\infty\) in operator norm. For every \(k\ge0\), we calculate that 
\[
    \frac{\d^k}{\d t^k}Q(t)=i^k U(-t)\,{\rm ad}_{\mathscr A}^k(Q)\,U(t)\quad{\rm where}\quad{\rm ad}_{\mathscr A}(Q):=\mathscr AQ-Q\mathscr A.
\]
Note that \(\|{\rm ad}_{\mathscr A}(Q)\|\leq 2\|\mathscr A\|\|Q\|\), we deduce 
\[
    |q_j^{(k)}(t)|\le(2\|\mathscr A\|)^k\|Q\|,\quad 0\le t\le T_0,\quad k\ge0.
\]
By Arzel\`a--Ascoli and a diagonal extraction, after passing to a
subsequence we may assume that \(q_j^{(k)}\) converges uniformly on \([0,T_0]\) for every \(k\ge0\). In particular, \(q_j\to q\) uniformly, where \(q\in C^\infty([0,T_0])\), \(q\ge0\), and
\[
    |q^{(k)}(t)|\le(2\|\mathscr A\|)^k\|Q\|,\quad 0\le t\le T_0,\quad k\ge0.
\]
This derivative bound implies that \(q\) is real analytic on
\([0,T_0]\). Since
\[
    \int_0^T q(t)\,\d t=\lim_{j\to\infty}\int_0^T q_j(t)\,\d t=0
\]
and \(q\ge0\), we have \(q=0\) on \([0,T]\). By analyticity we obtain \(q\equiv 0\) on \([0,T_0]\). Therefore
\[
    \int_0^{T_0}q_j(t)\,\d t\to\int_0^{T_0}q(t)\,\d t=0.
\]
This contradicts the assumed observability on \([0,T_0]\), because
\[
    \int_0^{T_0}q_j(t)\,\d t=\int_0^{T_0}\|\mathscr CU(t)x_j\|_Y^2\,\d t\ge c_0\|x_j\|_X^2=c_0.
\]
The lemma follows.
\end{proof}

The following proposition shows that observability at some time and at any time are equivalent. This fact uses only that \(H\) is a bounded self-adjoint operator.
\begin{proposition}
\label{prop:some-any-asymptotic-c}
Let \(G=\mathbb Z\), let \(E\subset\mathbb Z\), and assume that
\(V(n)\to c\in\mathbb R\) as \(|n|\to\infty\). Then the following are
equivalent:
\begin{enumerate}
    \item[(i)] \(E\) is observable at some time for \eqref{eq:DS};
    \item[(ii)] \(E\) is observable at any time for \eqref{eq:DS}.
\end{enumerate}
\end{proposition}

\begin{proof}
The implication \((ii)\Rightarrow(i)\) is immediate.

Assume that \(E\) is observable at some time. Then there exist
\(T_0>0\) and \(c_0>0\) such that
\[
    \int_0^{T_0}
    \|\1_E e^{-itH}f\|_{\ell^2(\mathbb Z)}^2\,\d t
    \ge
    c_0\|f\|_{\ell^2(\mathbb Z)}^2,
    \qquad \forall f\in\ell^2(\mathbb Z).
\]
Since \(V(n)\to c\) as \(|n|\to\infty\), the operator \(H\) is bounded
self-adjoint on \(\ell^2(\mathbb Z)\).  Applying
Lemma~\ref{lem:some-time-any-time-bounded-asymptotic-c} with \eqref{eq:setting},
we obtain that, for every \(T>0\), there exists \(c_T>0\) such that
\[
    \int_0^T
    \|\1_E e^{-itH}f\|_{\ell^2(\mathbb Z)}^2\,\d t
    \ge
    c_T\|f\|_{\ell^2(\mathbb Z)}^2,
    \qquad \forall f\in\ell^2(\mathbb Z).
\]
This is exactly observability at any time for \eqref{eq:DS}.
\end{proof}

\begin{proof}[Proof of Theorem~\ref{thm:1dZ}]
Theorem~\ref{thm:1dZ} follows from Proposition~\ref{prop:some-time-LAC-asymptotic-c}
and Proposition~\ref{prop:some-any-asymptotic-c}.
\end{proof}

\subsection{Heat observability with asymptotically constant potentials on \texorpdfstring{$\mathbb Z$}{Z}}
\label{sec:heat-asymptotically-constant-potentials-Z}

In this subsection we prove the heat analogue of
Theorem~\ref{thm:1dZ}. We keep the notation from
Subsection~\ref{sec:proof-thm-1dZ}.
We consider the heat equation
\begin{equation}\label{eq:heat-asymptotic-potential-Z}
\begin{cases}
    \partial_t u(t)=-Hu(t), & t>0,\\
    u(0)=u_0\in \ell^2(\mathbb Z).
\end{cases}
\end{equation}

We record an abstract comparison lemma, which shows that Schr\"odinger
observability implies heat observability. It uses only the boundedness of the
self-adjoint generator and is independent of the arithmetic condition.

\begin{lemma}
\label{lem:schr-to-heat-bounded-asymptotic}
Let \(X,Y\) be Hilbert spaces. Let
\(\mathscr A\in\mathcal L(X)\) be self-adjoint and let
\(\mathscr C\in\mathcal L(X,Y)\). Suppose that there exist
\(T_0>0\) and \(c_0>0\) such that
\begin{equation}\label{eq:abstract-schrodinger-observability-asymptotic}
    \int_0^{T_0}
    \|\mathscr C e^{-it\mathscr A}x\|_Y^2\,\d t
    \ge
    c_0\|x\|_X^2,
    \qquad
    \forall x\in X.
\end{equation}
Then for every \(T>0\) there exists \(c_T>0\) such that
\begin{equation}\label{eq:abstract-heat-observability-asymptotic}
    \int_0^T
    \|\mathscr C e^{-t\mathscr A}x\|_Y^2\,\d t
    \ge
    c_T\|x\|_X^2,
    \qquad
    \forall x\in X.
\end{equation}
\end{lemma}

\begin{proof}
Suppose, by contradiction, that
\eqref{eq:abstract-heat-observability-asymptotic} fails for some
\(T>0\). Then there exists a sequence \(x_j\in X\) such that $ \|x_j\|_X=1$ and
\begin{equation}\label{eq:heat-small-abstract-asymptotic}
    \int_0^T
    \|\mathscr C e^{-t\mathscr A}x_j\|_Y^2\,\d t
    \to0.
\end{equation}
Set
\[
    F_j(t):=\mathscr C e^{-t\mathscr A}x_j,
    \qquad 0\le t\le T.
\]
Then
\[
    F_j^{(k)}(t)=(-1)^k\mathscr C\mathscr A^k e^{-t\mathscr A}x_j,
    \qquad k\ge0.
\]
For every fixed \(k\ge0\),
\[
    \sup_j
    \|F_j^{(k)}\|_{L^2(0,T;Y)}
    \le
    T^{1/2}
    \|\mathscr C\|_{\mathcal L(X,Y)}
    \|\mathscr A\|_{\mathcal L(X)}^k
    e^{T\|\mathscr A\|_{\mathcal L(X)}}.
\]
Moreover, by \eqref{eq:heat-small-abstract-asymptotic},
\[
    \|F_j\|_{L^2(0,T;Y)}\to0.
\]
Using the Hilbert-valued Gagliardo--Nirenberg interpolation inequality,
we obtain, for every fixed \(k\ge0\),
\[
    \|F_j^{(k)}\|_{L^2(0,T;Y)}\to0.
\]
Indeed, one applies the interpolation inequality to \(F_j\) with top
derivative \(F_j^{(k+1)}\), which is uniformly bounded in
\(L^2(0,T;Y)\).

We next pass from \(L^2\)-smallness to the value at \(t=0\). For an
absolutely continuous \(Y\)-valued function \(g\) on \([0,T]\), the trace
estimate
\[
    \|g(0)\|_Y^2
    \le
    \frac{2}{T}\|g\|_{L^2(0,T;Y)}^2
    +
    2\|g\|_{L^2(0,T;Y)}
      \|g'\|_{L^2(0,T;Y)}
\]
holds. Applying this estimate to \(g=F_j^{(k)}\), and using the uniform
\(L^2\)-boundedness of \(F_j^{(k+1)}\), we get
\[
    F_j^{(k)}(0)\to0.
\]
Equivalently,
\begin{equation}\label{eq:CAk-small-asymptotic}
    \mathscr C\mathscr A^k x_j\to0,
    \qquad
    \forall k\ge0.
\end{equation}

We now prove that
\[
    \mathscr C e^{-it\mathscr A}x_j\to0
\]
uniformly for \(t\in[0,T_0]\). Fix \(N\in\mathbb N\). By Taylor's formula,
\[
    \mathscr C e^{-it\mathscr A}x_j
    =
    \sum_{k=0}^{N-1}
    \frac{(-it)^k}{k!}\mathscr C\mathscr A^k x_j
    +
    \mathscr C\left(
    e^{-it\mathscr A}
    -
    \sum_{k=0}^{N-1}\frac{(-it\mathscr A)^k}{k!}
    \right)x_j .
\]
Since \(\mathscr A\) is self-adjoint, the scalar Taylor remainder estimate
applied through the spectral theorem gives
\[
    \left\|
    e^{-it\mathscr A}
    -
    \sum_{k=0}^{N-1}\frac{(-it\mathscr A)^k}{k!}
    \right\|_{\mathcal L(X)}
    \le
    \frac{\bigl(T_0\|\mathscr A\|_{\mathcal L(X)}\bigr)^N}{N!},
    \qquad
    0\le t\le T_0.
\]
Therefore,
\[
    \sup_{0\le t\le T_0}
    \left\|
    \mathscr C\left(
    e^{-it\mathscr A}
    -
    \sum_{k=0}^{N-1}\frac{(-it\mathscr A)^k}{k!}
    \right)x_j
    \right\|_Y
    \le
    \|\mathscr C\|_{\mathcal L(X,Y)}
    \frac{\bigl(T_0\|\mathscr A\|_{\mathcal L(X)}\bigr)^N}{N!}.
\]
For fixed \(N\), the finite Taylor sum tends to \(0\) uniformly in
\(t\in[0,T_0]\) by \eqref{eq:CAk-small-asymptotic}. Letting
\(N\to\infty\), we get
\[
    \sup_{0\le t\le T_0}
    \|\mathscr C e^{-it\mathscr A}x_j\|_Y
    \to0.
\]
Consequently,
\[
    \int_0^{T_0}
    \|\mathscr C e^{-it\mathscr A}x_j\|_Y^2\,\d t
    \to0,
\]
which contradicts
\eqref{eq:abstract-schrodinger-observability-asymptotic}, because
\(\|x_j\|_X=1\). The lemma follows.
\end{proof}

We now prove the arithmetic characterization for the heat equation.

\begin{proof}[Proof of Theorem \ref{thm:heat-local-arithmetic-Z}]
We first prove \((iii)\Rightarrow(ii)\). Assume that \(E\) satisfies LAC.
By Proposition~\ref{prop:some-time-LAC-asymptotic-c}, \(E\) is observable
at some time for the Schr\"odinger equation generated by \(H\). Hence
there exist \(T_0>0\) and \(c_0>0\) such that
\[
    \int_0^{T_0}
    \|\1_E e^{-itH}f\|_{\ell^2(\mathbb Z)}^2\,\d t
    \ge
    c_0\|f\|_{\ell^2(\mathbb Z)}^2,
    \qquad
    \forall f\in \ell^2(\mathbb Z).
\]
Since \(V(n)\to c\) as \(|n|\to\infty\), the operator \(H\) is bounded
self-adjoint on \(\ell^2(\mathbb Z)\). 
Applying Lemma~\ref{lem:schr-to-heat-bounded-asymptotic} with \begin{align}
 X=Y=\ell^2(\Z),\quad\mathscr A=H\quad{\rm and}\quad\mathscr C=\1_E\cdot{\rm Id}_{\ell^2(\Z)},
\end{align}
we derive that, for every \(T>0\), there exists \(c_T>0\) such that
\[
    \int_0^T
    \|\1_E e^{-tH}f\|_{\ell^2(\mathbb Z)}^2\,\d t
    \ge
    c_T\|f\|_{\ell^2(\mathbb Z)}^2,
    \qquad
    \forall f\in \ell^2(\mathbb Z).
\]
This is exactly heat observability at any time for
\eqref{eq:heat-asymptotic-potential-Z}. Therefore $
    (iii)\Rightarrow(ii).
$

The implication \((ii)\Rightarrow(i)\) is immediate.

It remains to prove \((i)\Rightarrow(iii)\). We argue by contraposition.
Assume that \(E\) does not satisfy LAC. By
Lemma~\ref{lem:arithmetic-quasimodes-H0}, there exist
\(f_N\in\ell^2(\mathbb Z)\) and \(\mu_N\in[c,c+4]\) such that
\[
    \|f_N\|_{\ell^2(\mathbb Z)}=1,
    \qquad
    f_N|_E=0,
\]
and
\[
    \|(H_0-\mu_N)f_N\|_{\ell^2(\mathbb Z)}\to0,
    \qquad
    \|f_N\|_{\ell^\infty(\mathbb Z)}\to0 .
\]
We claim that
\[
    \|Wf_N\|_{\ell^2(\mathbb Z)}\to0.
\]
Indeed, fix \(\varepsilon>0\). Since \(W(n)\to0\) as \(|n|\to\infty\),
choose \(R>0\) such that
\[
    \sup_{|n|>R}|W(n)|\le\varepsilon.
\]
Then
\[
\begin{aligned}
    \|Wf_N\|_{\ell^2(\mathbb Z)}^2
    &\le
    \|W\|_{\ell^\infty(\mathbb Z)}^2
    \sum_{|n|\le R}|f_N(n)|^2
    +
    \varepsilon^2
    \sum_{|n|>R}|f_N(n)|^2                                      \\
    &\le
    \|W\|_{\ell^\infty(\mathbb Z)}^2(2R+1)
    \|f_N\|_{\ell^\infty(\mathbb Z)}^2
    +
    \varepsilon^2 .
\end{aligned}
\]
Letting \(N\to\infty\), and then \(\varepsilon\to0\), gives
\[
    \|Wf_N\|_{\ell^2(\mathbb Z)}\to0.
\]
Therefore
\begin{equation}\label{eq:heat-asymptotic-Weyl-H}
    \|(H-\mu_N)f_N\|_{\ell^2(\mathbb Z)}
    \le
    \|(H_0-\mu_N)f_N\|_{\ell^2(\mathbb Z)}
    +
    \|Wf_N\|_{\ell^2(\mathbb Z)}
    \to0.
\end{equation}

Let \(T>0\) be arbitrary. Since \(H\) is bounded, Duhamel's formula gives
\[
    e^{-tH}f_N-e^{-t\mu_N}f_N
    =
    -\int_0^t
    e^{-(t-s)H}e^{-s\mu_N}
    (H-\mu_N)f_N\,\d s .
\]
Set
\[
    M_c:=\max\{|c|,|c+4|\}.
\]
Since \(\mu_N\in[c,c+4]\), we have
\[
    |e^{-s\mu_N}|\le e^{T M_c},
    \qquad 0\le s\le T.
\]
Moreover,
\[
    \|e^{-(t-s)H}\|_{\mathcal L(\ell^2(\mathbb Z))}
    \le e^{T\|H\|_{\mathcal L(\ell^2(\mathbb Z))}},
    \qquad 0\le s\le t\le T.
\]
Hence
\[
    \|e^{-tH}f_N-e^{-t\mu_N}f_N\|_{\ell^2(\mathbb Z)}
    \le
    t e^{T\left(\|H\|_{\mathcal L(\ell^2(\mathbb Z))}+M_c\right)}
    \|(H-\mu_N)f_N\|_{\ell^2(\mathbb Z)}.
\]
Since \(f_N|_E=0\), we have
\[
    \1_E e^{-t\mu_N}f_N=0.
\]
Therefore
\[
    \|\1_E e^{-tH}f_N\|_{\ell^2(\mathbb Z)}
    \le
    t e^{T\left(\|H\|_{\mathcal L(\ell^2(\mathbb Z))}+M_c\right)}
    \|(H-\mu_N)f_N\|_{\ell^2(\mathbb Z)}.
\]
Integrating in time, we obtain
\[
\begin{aligned}
    \int_0^T
    \|\1_E e^{-tH}f_N\|_{\ell^2(\mathbb Z)}^2\,\d t
    &\le
    e^{2T\left(\|H\|_{\mathcal L(\ell^2(\mathbb Z))}+M_c\right)}
    \|(H-\mu_N)f_N\|_{\ell^2(\mathbb Z)}^2
    \int_0^T t^2\,\d t                                      \\
    &=
    \frac{T^3}{3}
    e^{2T\left(\|H\|_{\mathcal L(\ell^2(\mathbb Z))}+M_c\right)}
    \|(H-\mu_N)f_N\|_{\ell^2(\mathbb Z)}^2
    \to0.
\end{aligned}
\]
On the other hand, $
    \|f_N\|_{\ell^2(\mathbb Z)}=1.
$
Thus the heat observability inequality
\eqref{eq:heat-observability-asymptotic-Z} cannot hold at this time \(T\).
Since \(T>0\) was arbitrary, \(E\) is not heat observable at any time; in
particular, it is not heat observable at some time. This proves the
contrapositive of \((i)\Rightarrow(iii)\).
The theorem is proved.
\end{proof}

\subsection{Observable sets in high dimensions}

We first show that high thickness does not imply observability in higher
dimensions, in contrast to the one-dimensional result that
\(\frac12\)-thickness implies observability.

\begin{proposition}\label{prop:Z2}
Let \(G=\mathbb Z^2\), and let $
    H=-\Delta_{\rm disc}
$. Set
\[
    D:=\{(m,n)\in\mathbb Z^2:m=n\},
    \qquad
    E:=\mathbb Z^2\setminus D .
\]
Then \(E\) is not observable at any time \(T>0\).
\end{proposition}

\begin{proof}
Define
\[
    \psi(m,n):=
    \begin{cases}
        (-1)^m, & m=n,\\
        0, & m\ne n.
    \end{cases}
\]
Then \(\psi|_E=0\). We first claim that
\[
    H\psi=4\psi
\]
pointwise on \(\mathbb Z^2\).

Indeed, if \((m,n)\in D\), then its four nearest neighbours do not belong
to \(D\). Hence
\[
    (H\psi)(m,n)=4\psi(m,n).
\]
If \((m,n)\notin D\), then \(\psi(m,n)=0\). When \(|m-n|>1\), none of
the nearest neighbours of \((m,n)\) belongs to \(D\), and hence
\[
    (H\psi)(m,n)=0=4\psi(m,n).
\]
It remains to consider the case \(|m-n|=1\). If \(m-n=1\), then the two
neighbours of \((m,n)\) lying on \(D\) are $
    (m-1,n),
    (m,n+1),
$
and their values are
\[
    \psi(m-1,n)=(-1)^{m-1},
    \qquad
    \psi(m,n+1)=(-1)^m .
\]
These two values cancel. Similarly, if \(n-m=1\), then the two neighbours
of \((m,n)\) lying on \(D\) are
$
    (m+1,n),
    (m,n-1),
$
and their values are
\[
    \psi(m+1,n)=(-1)^{m+1},
    \qquad
    \psi(m,n-1)=(-1)^m ,
\]
which also cancel. Therefore
\[
    (H\psi)(m,n)=0=4\psi(m,n)
\]
for all \((m,n)\notin D\). This proves \(H\psi=4\psi\).

Although \(\psi\notin\ell^2(\mathbb Z^2)\), its truncations yield
invisible quasimodes. For \(N\in\mathbb N\), define
\[
    \psi_N(m,n):=
    \begin{cases}
        (-1)^m, & m=n,\ |m|\le N,\\
        0, & \text{otherwise}.
    \end{cases}
\]
Then
$
    \spt\psi_N\subset D,
    \psi_N|_E=0,
$
and
$
    \|\psi_N\|_{\ell^2(\mathbb Z^2)}^2=2N+1.
$
Moreover, since \(H\psi=4\psi\), the error $
    (H-4)\psi_N $
is supported only near the two endpoints \((N,N)\) and \((-N,-N)\) of the
truncated diagonal segment. Therefore there exists a constant \(C>0\),
independent of \(N\), such that
\[
    \|(H-4)\psi_N\|_{\ell^2(\mathbb Z^2)}\le C.
\]
Set
\[
    f_N:=\frac{\psi_N}{\|\psi_N\|_{\ell^2(\mathbb Z^2)}}.
\]
Then
\[
    \|f_N\|_{\ell^2(\mathbb Z^2)}=1,
    \qquad
    f_N|_E=0,
\]
and
\[
    \|(H-4)f_N\|_{\ell^2(\mathbb Z^2)}
    =
    \frac{\|(H-4)\psi_N\|_{\ell^2(\mathbb Z^2)}}
    {\|\psi_N\|_{\ell^2(\mathbb Z^2)}}
    \le
    \frac{C}{\sqrt{2N+1}}
    \longrightarrow 0.
\]

Fix \(T>0\). By Duhamel's formula, for every \(t\in[0,T]\),
\[
    e^{-itH}f_N-e^{-4it}f_N
    =
    -i\int_0^t
    e^{-i(t-s)H}e^{-4is}(H-4)f_N\,\d s .
\]
Since \(e^{-itH}\) is unitary on \(\ell^2(\mathbb Z^2)\), it follows that
\[
    \|e^{-itH}f_N-e^{-4it}f_N\|_{\ell^2(\mathbb Z^2)}
    \le
    t\|(H-4)f_N\|_{\ell^2(\mathbb Z^2)}.
\]
Using \(f_N|_E=0\), we have $
    \1_E e^{-4it}f_N=0.
$
Hence
\[
\begin{aligned}
    \|\1_E e^{-itH}f_N\|_{\ell^2(\mathbb Z^2)}
    &\le
    \|e^{-itH}f_N-e^{-4it}f_N\|_{\ell^2(\mathbb Z^2)}  \\
    &\le
    t\|(H-4)f_N\|_{\ell^2(\mathbb Z^2)} .
\end{aligned}
\]
Consequently,
\[
\begin{aligned}
    \int_0^T
    \|\1_E e^{-itH}f_N\|_{\ell^2(\mathbb Z^2)}^2\,\d t
    &\le
    \int_0^T
    t^2\|(H-4)f_N\|_{\ell^2(\mathbb Z^2)}^2\,\d t  \\
    &=
    \frac{T^3}{3}
    \|(H-4)f_N\|_{\ell^2(\mathbb Z^2)}^2
    \longrightarrow 0.
\end{aligned}
\]

If \(E\) were observable at time \(T\), then there would exist a constant
\(C_T>0\) such that
\[
    \|u_0\|_{\ell^2(\mathbb Z^2)}^2
    \le
    C_T
    \int_0^T
    \|\1_E e^{-itH}u_0\|_{\ell^2(\mathbb Z^2)}^2\,\d t
\]
for all \(u_0\in\ell^2(\mathbb Z^2)\). Taking \(u_0=f_N\), we obtain
\[
    1
    =
    \|f_N\|_{\ell^2(\mathbb Z^2)}^2
    \le
    C_T
    \int_0^T
    \|\1_E e^{-itH}f_N\|_{\ell^2(\mathbb Z^2)}^2\,\d t
    \longrightarrow 0,
\]
which is impossible. Therefore \(E\) is not observable at time \(T\).
Since \(T>0\) was arbitrary, \(E\) is not observable at any time.
\end{proof}

We now prove exterior observability on $\Z^d$. 
\begin{proof}[Proof of Theorem \ref{thm:Z^d}]
Since \(V\in\ell^\infty(\mathbb Z^d;\mathbb R)\), the operator
\(H=-\Delta_{\rm disc}+V\) is bounded and self-adjoint on
\(\ell^2(\mathbb Z^d)\). Set \(F:=E^c\). Let
\[
    U(t):=e^{-itH},
    \qquad
    P_F:=\1_F\cdot{\rm Id}_{\ell^2(\mathbb Z^d)} .
\]
Then \(P_F\) is an orthogonal projection of finite rank. For \(T>0\),
define
\[
    K_T
    :=
    \int_0^T U(t)^*P_FU(t)\,dt .
\]
The integral is understood in the operator norm. Since \(H\) is bounded,
\(t\mapsto U(t)^*P_FU(t)\) is norm-continuous, and since each
\(U(t)^*P_FU(t)\) has finite rank, \(K_T\) is compact. Moreover,
\(K_T\) is positive and $
    0\le K_T\le T\cdot{\rm Id}_{\ell^2(\Z^{d})}.$

For every \(f\in\ell^2(\mathbb Z^d)\), using unitarity of \(U(t)\), we
have
\[
\begin{aligned}
    \int_0^T
    \|\1_E U(t)f\|_{\ell^2(\mathbb Z^d)}^2\,dt
    &=
    \int_0^T
    \bigl(
        \|U(t)f\|_{\ell^2(\mathbb Z^d)}^2
        -
        \|P_FU(t)f\|_{\ell^2(\mathbb Z^d)}^2
    \bigr)\,dt                                      \\
    &=
    T\|f\|_{\ell^2(\mathbb Z^d)}^2
    -
    \langle K_Tf,f\rangle .
\end{aligned}
\]
Thus it is enough to prove that $
    \|K_T\|_{\mathcal L(\ell^2)}<T$.

Suppose, by contradiction, that $
    \|K_T\|_{\mathcal L(\ell^2)}=T.$
If \(F=\emptyset\), the conclusion is immediate. Hence we assume \(F\ne\emptyset\). Since \(K_T\) is compact, positive and nonzero, there exists
\(f\in\ell^2(\mathbb Z^d)\) with $
    \|f\|_{\ell^2}=1$ 
such that $
    K_Tf=Tf.$
Hence
\[
    \int_0^T
    \|P_FU(t)f\|_{\ell^2(\mathbb Z^d)}^2\,dt
    =
    T.
\]
But
\[
    \|P_FU(t)f\|_{\ell^2}
    \le
    \|U(t)f\|_{\ell^2}
    =
    \|f\|_{\ell^2}
    =
    1.
\]
Therefore
\[
    \|P_FU(t)f\|_{\ell^2}=1,
    \qquad
    0\le t\le T.
\]
Since \(P_F\) is an orthogonal projection, this implies
\[
    U(t)f\in {\rm Ran}(P_F),
    \qquad
    0\le t\le T.
\]
In particular, $
    f\in{\rm Ran}(P_F).$

Because \(H\) is bounded, the map \(t\mapsto U(t)f\) is analytic in
\(\ell^2(\mathbb Z^d)\). Since \(U(t)f\in{\rm Ran}(P_F)\) for
\(0\le t\le T\), differentiating at \(t=0\) gives
\[
    H^k f\in{\rm Ran}(P_F),
    \qquad
    \forall k\ge0.
\]
Set
\[
    \mathcal M:=\operatorname{span}\{H^k f:k\ge0\}.
\]
Then \(\mathcal M\) is a nonzero finite-dimensional subspace of
\({\rm Ran}(P_F)\), and $
    H\mathcal M\subset \mathcal M.$
Since \(H\) is self-adjoint, the restriction \(H|_{\mathcal M}\) has an
eigenvector. Hence there exist
\[
    0\ne \psi\in\mathcal M\subset{\rm Ran}(P_F),
    \qquad
    \lambda\in\mathbb R,
\]
such that $
    H\psi=\lambda\psi.$
Thus \(\psi\) is a nonzero finitely supported eigenfunction of \(H\),
supported inside the finite set \(F\).

We now show that this is impossible. Let $
    S:=\operatorname{spt}\psi.$
Since \(S\) is finite and nonempty, choose \(n_0\in S\) whose first
coordinate is maximal among all points of \(S\). Set $
    y:=n_0+e_1.$
Then \(y\notin S\). Moreover, the only neighbor of \(y\) which can belong
to \(S\) is \(n_0\). Indeed, any other neighbor of \(y\) has first
coordinate strictly larger than the first coordinate of \(n_0\), and
therefore cannot belong to \(S\).

Since \(\psi(y)=0\), evaluating \(H\psi=\lambda\psi\) at \(y\) gives
\[
    0
    =
    (H\psi)(y)
    =
    -\sum_{z\sim y}\psi(z)
    +(2d+V(y))\psi(y)
    =
    -\psi(n_0).
\]
Hence \(\psi(n_0)=0\), contradicting \(n_0\in S\). Therefore no nonzero
finitely supported eigenfunction of \(H\) exists.

This contradiction proves
\[
    \|K_T\|_{\mathcal L(\ell^2)}<T.
\]
Consequently,
\[
\begin{aligned}
    \int_0^T
    \|\1_EU(t)f\|_{\ell^2(\mathbb Z^d)}^2\,dt
    &=
    T\|f\|_{\ell^2(\mathbb Z^d)}^2
    -
    \langle K_Tf,f\rangle                                  \\
    &\ge
    \bigl(T-\|K_T\|_{\mathcal L(\ell^2)}\bigr)
    \|f\|_{\ell^2(\mathbb Z^d)}^2 .
\end{aligned}
\]
Thus the observability inequality holds with
\[
    C_T
    =
    \bigl(T-\|K_T\|_{\mathcal L(\ell^2)}\bigr)^{-1}.
\]
The theorem is proved.
\end{proof}

\section{Observable sets on discrete tori}\label{sec:3}

\subsection{An equivalent criterion for observability on finite graphs}\label{sec:3.1}

In this subsection, we study the free Schr\"odinger equation \eqref{eq:DS} on a finite graph $G=(\V,\E)$. In this setting, the discrete Laplacian $\Delta_{\mathrm{disc}}$ is a finite-dimensional self-adjoint operator, and its spectral decomposition is standard. The following standard theorem gives an equivalent condition between observability and zero sets of eigenfunctions.

\begin{theorem}\label{thm:finite}
Let \(G=(\mathcal{V},\mathcal{E})\) be a finite graph with discrete Laplacian
\(\Delta_{\mathrm{disc}}\), and let \(E\subseteq \mathcal{V}\). Consider the
free Schr\"odinger equation
\[
    i\partial_t u=-\Delta_{\mathrm{disc}}u .
\]
Then the following are equivalent:
\begin{enumerate}
  \item[(i)] \(E\) is observable at some time \(T>0\);
  \item[(ii)] \(E\) is observable at any time \(T>0\);
  \item[(iii)] for every \(\lambda\in\mathbb R\),
  \[
  \ker(\Delta_{\mathrm{disc}}-\lambda)\cap
  \{f\in\ell^2(\mathcal V): f|_E\equiv0\}=\{0\}.
  \]
\end{enumerate}
\end{theorem}

\begin{proof}
It is immediate that \((ii)\Rightarrow(i)\). We first prove
\((iii)\Rightarrow(ii)\).

Let \(\{\psi_k\}_{k=1}^{n}\) be an orthonormal eigenbasis of
\(\Delta_{\mathrm{disc}}\) in \(\ell^2(\mathcal V)\), with real eigenvalues
\(\{\lambda_k\}_{k=1}^{n}\), where \(n=|\mathcal V|\). Expanding the initial
datum as
\[
  u_0=\sum_{k=1}^{n} c_k \psi_k,
  \qquad
  \bm{c}=(c_1,\dots,c_n)^\top,
  \qquad
  \|u_0\|_{\ell^2(\mathcal V)}^2=\bm{c}^*\bm{c},
\]
the solution is
\[
  u(t)=\sum_{k=1}^{n} c_k e^{i t\lambda_k}\psi_k .
\]
For fixed \(T>0\), define the Gramian matrix \(\G=\G(T,E)\) by
\[
  \G_{jk}
  =
  \big\langle \psi_k|_E,\psi_j|_E\big\rangle_{\ell^2(E)}
  \int_0^T e^{i t(\lambda_k-\lambda_j)}\,\d t .
\]
Then
\[
  \int_0^T \|u(t)\|_{\ell^2(E)}^2\,\d t
  =
  \bm{c}^*\G\bm{c}.
\]
We claim that \(\G\) is positive definite. Suppose that
\(\bm{c}^*\G\bm{c}=0\). Then
\[
  \int_0^T \|u(t)\|_{\ell^2(E)}^2\,\d t=0.
\]
Since \(t\mapsto \|u(t)\|_{\ell^2(E)}^2\) is nonnegative and continuous, it
follows that
\[
  \|u(t)\|_{\ell^2(E)}=0,
  \qquad 0\le t\le T.
\]
Equivalently,
\[
  \sum_{k=1}^{n} c_k e^{i t\lambda_k}\psi_k(x)=0,
  \qquad x\in E,\quad 0\le t\le T.
\]
Let \(\{\Lambda_m\}\) be the distinct eigenvalues among
\(\{\lambda_k\}_{k=1}^n\), and group the above sum according to eigenspaces:
\[
  \sum_m e^{i t\Lambda_m} w_m(x)=0,
  \qquad
  w_m:=\sum_{k:\lambda_k=\Lambda_m} c_k\psi_k,
  \qquad x\in E .
\]
The exponentials \(\{e^{i t\Lambda_m}\}_m\), with distinct real frequencies,
are linearly independent on every interval of positive length. Hence
\[
  w_m(x)=0,
  \qquad x\in E,
\]
for every \(m\). Thus \(w_m|_E\equiv0\). Since each \(w_m\) belongs to the
eigenspace of \(\Delta_{\mathrm{disc}}\) associated with \(\Lambda_m\),
assumption (iii) implies that \(w_m=0\) for every \(m\). Therefore
\[
  u_0=\sum_m w_m=0,
\]
and hence \(\bm{c}=0\). This proves that \(\G\) is positive definite.

Let \(\mu_{\min}(T,E)>0\) be the smallest eigenvalue of \(\G\). Then
\[
  \int_0^T \|u(t)\|_{\ell^2(E)}^2\,\d t
  =
  \bm{c}^*\G\bm{c}
  \ge
  \mu_{\min}(T,E)\,\bm{c}^*\bm{c}
  =
  \mu_{\min}(T,E)\,\|u_0\|_{\ell^2(\mathcal V)}^2.
\]
Therefore
\[
  \|u_0\|_{\ell^2(\mathcal V)}^2
  \le
  \frac{1}{\mu_{\min}(T,E)}
  \int_0^T \|u(t)\|_{\ell^2(E)}^2\,\d t .
\]
Since \(T>0\) was arbitrary, \(E\) is observable at any time. Thus
\((iii)\Rightarrow(ii)\).

It remains to prove \((i)\Rightarrow(iii)\). We argue by contradiction.
Suppose that there exist \(\lambda_0\in\mathbb R\) and a nonzero
\(\psi_0\in\ker(\Delta_{\mathrm{disc}}-\lambda_0)\) such that
\[
    \psi_0|_E\equiv0.
\]
Taking \(u_0=\psi_0\), the corresponding solution is
\[
    u(t)=e^{i t\lambda_0}\psi_0 .
\]
Hence
\[
  \|u(t)\|_{\ell^2(E)}^2
  =
  \sum_{x\in E}\big|e^{i t\lambda_0}\psi_0(x)\big|^2
  =
  \sum_{x\in E}|\psi_0(x)|^2
  =
  0
\]
for every \(t\ge0\). Therefore, for every \(T>0\),
\[
  \int_0^T \|u(t)\|_{\ell^2(E)}^2\,\d t=0,
\]
whereas
\[
  \|u_0\|_{\ell^2(\mathcal V)}^2
  =
  \|\psi_0\|_{\ell^2(\mathcal V)}^2
  >
  0.
\]
Thus no observability inequality can hold at any time \(T>0\), contradicting
(i). Hence (iii) follows.

Combining the three implications gives the equivalence of (i), (ii), and
(iii).
\end{proof}

\subsection{A positive density construction  for unobservable sets on discrete tori}\label{sec:3.2}

On discrete tori, the combinatorial Laplacian $\Delta_{\mathrm{disc}}$ acts on $f:(\Z/N\Z)^d\to\mathbb{C}$ by
\[
  (\Delta_{\mathrm{disc}} f)(x) \;=\; \sum_{j=1}^{d}\bigl(f(x+e_j)+f(x-e_j)\bigr) - 2d\,f(x).
\]
For $k:=(k_1,\dots,k_d)\in(\Z/N\Z)^d$, define the character
\[
  \varphi_k(x) \;=\; \exp\!\Bigl(\frac{2\pi i}{N}\,\ip{k}{x}\Bigr),\qquad x\in(\Z/N\Z)^d,
\]
where $\ip{k}{x}=k_1x_1+\cdots+k_dx_d$ (interpreted modulo $N$). A direct computation gives that
\[
\Delta_{\mathrm{disc}}\varphi_k=\left(2\sum_{j=1}^{d}\cos\!\Bigl(\frac{2\pi k_j}{N}\Bigr)-2d \right)\varphi_k:=\mu(k)\varphi_k.
\]

The imaginary parts $\psi_k^{\sin}(x)=\sin(\tfrac{2\pi}{N}\ip{k}{x})$, when nonzero, are real-valued eigenfunctions with eigenvalue $\mu(k)$. By the standard structure theory for finite abelian groups, the characters $\{\varphi_k\}_{k\in(\mathbb Z/N\mathbb Z)^d}$ form an orthogonal basis of the Hilbert space $L^2\!\bigl((\mathbb Z/N\mathbb Z)^d\bigr)$.

Fix $k\in(\mathbb Z/N\mathbb Z)^d$. Consider the group homomorphism
\[
  F_k:(\mathbb Z/N\mathbb Z)^d\longrightarrow \mathbb Z/N\mathbb Z,\qquad
  F_k(x)=\ip{k}{x}\ (\mathrm{mod}\ N),
\]
and write
\[
  d_0=\gcd(k_1,\dots,k_d,N)\in\Z_{\ge 1}.
\]

On the additive group $\Z/N\Z$, we introduce
\[
  S_{\sin} \;:=\; \bigl\{\, r\in \Z/N\Z:\ \sin\!\bigl(\frac{2\pi r}{N}\bigr)=0 \,\bigr\}
  \;=\;
  \begin{cases}
    \{\,0\,\}, & 2\nmid N,\\
    \{\,0,\ N/2\,\}, & 2\mid N,
  \end{cases}
\]
Here $N/2$ denotes the unique nonzero residue in $\Z/N\Z$ satisfying $2\cdot (N/2)\equiv 0$.

We have the following proposition for counting the size of the zero set $Z(\psi_k^{\sin})$.

\begin{proposition}[Zero set size via group homomorphisms]\label{prop:zero-count}
With the notation above, we have
\[
  Z\bigl(\psi_k^{\sin}\bigr)
  \;=\; F_k^{-1}\!\bigl(S_{\sin}\cap \mathrm{Im}\,F_k\bigr),\qquad
  \bigl|Z(\psi_k^{\sin})\bigr|
  \;=\; \bigl|S_{\sin}\cap \mathrm{Im}\,F_k\bigr|\cdot d_0\,N^{d-1},
\]
\end{proposition}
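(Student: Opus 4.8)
The plan is to reduce the statement to elementary facts about the group homomorphism $F_k$ together with the first isomorphism theorem. First I would record that $\psi_k^{\sin}(x)=\sin\!\bigl(\tfrac{2\pi}{N}\langle k,x\rangle\bigr)=\sin\!\bigl(\tfrac{2\pi}{N}F_k(x)\bigr)$ depends on $x$ only through the residue $F_k(x)\in\Z/N\Z$, so that $x\in Z(\psi_k^{\sin})$ precisely when $F_k(x)\in S_{\sin}$. Thus $Z(\psi_k^{\sin})=F_k^{-1}(S_{\sin})$, and since $F_k^{-1}(\{r\})=\varnothing$ for every $r\notin\mathrm{Im}\,F_k$, intersecting with the image costs nothing and yields the first identity $Z(\psi_k^{\sin})=F_k^{-1}\bigl(S_{\sin}\cap\mathrm{Im}\,F_k\bigr)$, with no computation involved.

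For the cardinality, the key point is that $F_k$ is a group homomorphism, so all its nonempty fibers are cosets of $\ker F_k$ and hence share the common size $|\ker F_k|$. I would compute $\mathrm{Im}\,F_k$ directly: evaluating at the standard basis vectors shows $k_1,\dots,k_d\in\mathrm{Im}\,F_k$, and conversely every value $\langle k,x\rangle\bmod N$ is an integer combination of the $k_j$, so $\mathrm{Im}\,F_k$ is the subgroup of $\Z/N\Z$ generated by $k_1,\dots,k_d$. By the classification of subgroups of a cyclic group together with B\'ezout's identity, this subgroup equals $d_0\,\Z/N\Z$ with $d_0=\gcd(k_1,\dots,k_d,N)$, so $|\mathrm{Im}\,F_k|=N/d_0$. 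The first isomorphism theorem then gives $|\ker F_k|=N^d/(N/d_0)=d_0\,N^{d-1}$.

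Finally I would sum over the fibers lying above the finitely many points of $S_{\sin}\cap\mathrm{Im}\,F_k$: these fibers are nonempty, pairwise disjoint, and each of cardinality $d_0\,N^{d-1}$, whence
\[
\bigl|Z(\psi_k^{\sin})\bigr|=\bigl|F_k^{-1}(S_{\sin}\cap\mathrm{Im}\,F_k)\bigr|=\bigl|S_{\sin}\cap\mathrm{Im}\,F_k\bigr|\cdot d_0\,N^{d-1},
\]
which is the second assertion.

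I expect no real obstacle here; the only point requiring a line of care is the purely arithmetic identification $\mathrm{Im}\,F_k=d_0\,\Z/N\Z$, i.e.\ that the subgroup of $\Z/N\Z$ generated by $k_1,\dots,k_d$ is the one generated by $\gcd(k_1,\dots,k_d,N)$. It is also worth flagging why the intersection with $\mathrm{Im}\,F_k$ is genuinely needed: when $N$ is even, $N/2\in\mathrm{Im}\,F_k$ only if $N/d_0$ is even, so the count depends on $S_{\sin}\cap\mathrm{Im}\,F_k$ rather than on $S_{\sin}$ alone.
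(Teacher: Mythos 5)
Your proof is correct and follows essentially the same route as the paper: identify $\mathrm{Im}\,F_k$ with the subgroup generated by $d_0=\gcd(k_1,\dots,k_d,N)$, use the homomorphism fiber count $|\ker F_k|=d_0 N^{d-1}$, and observe that $Z(\psi_k^{\sin})$ is the preimage of $S_{\sin}$. The only cosmetic difference is that the paper lifts to the ideal $\langle k_1,\dots,k_d,N\rangle\subset\Z$ where you invoke B\'ezout directly in $\Z/N\Z$; your closing remark on when $N/2\in\mathrm{Im}\,F_k$ is a useful clarification not spelled out in the paper.
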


\begin{proof}
Consider the subgroup $H\subset \Z/N\Z$ generated by the residues $[k_1],\dots,[k_d]$. The preimage of $H$ under the natural projection $\Z\to\Z/N\Z$ is the ideal $\langle k_1,\dots,k_d,N\rangle=d_0\Z$, where $d_0=\gcd(k_1,\dots,k_d,N)$. Hence $H=\langle [d_0]\rangle$ and therefore $|\mathrm{Im}\,F_k|=|H|=N/d_0$. Since $F_k$ is a homomorphism between finite groups, all fibers over residues in $\mathrm{Im}\,F_k$ have the same cardinality; explicitly,
$$
  \bigl|F_k^{-1}(r)\bigr| \;=\; |\ker F_k|
  \;=\; \frac{|(\mathbb Z/N\mathbb Z)^d|}{|\mathrm{Im}\,F_k|}
  \;=\; d_0\,N^{d-1}
  \qquad \text{for every } r\in \mathrm{Im}\,F_k.
$$

Next, by the elementary characterizations $\sin\theta=0$ iff $\theta\in \pi\Z$, a residue $r\in\Z/N\Z$ satisfies $\sin(2\pi r/N)=0$ exactly when $r\equiv 0$ (and also $r\equiv N/2$ if $2\mid N$). These are precisely the set $S_{\sin}$ appearing in the statement.

Finally, since $\psi_k^{\sin}(x)=\sin\!\bigl(\tfrac{2\pi}{N}F_k(x)\bigr)$, an element $x\in(\mathbb Z/N\mathbb Z)^d$ is a zero of $\psi_k^{\sin}$ if and only if $F_k(x)$ lies in $S_{\sin}$. Thus
$$
  Z\bigl(\psi_k^{\sin}\bigr) \;=\; F_k^{-1}\!\bigl(S_{\sin}\cap \mathrm{Im}\,F_k\bigr).
$$
Because the preimage of a subset is the disjoint union of the fibers over its elements, and each fiber has size $d_0\,N^{d-1}$, the cardinalities are
$$
  \bigl|Z(\psi_k^{\sin})\bigr|
  \;=\; \bigl|S_{\sin}\cap \mathrm{Im}\,F_k\bigr|\cdot d_0\,N^{d-1},
$$
as claimed.
\end{proof}

Now we begin to construct high-density unobservable sets via product eigenfunctions. For $r=(r_1,\dots,r_d)\in (\mathbb Z/N\mathbb Z)^d$, we consider
\begin{equation}\label{eq:Psi}
  \Psi_r(x)=\prod_{j=1}^d \sin\!\Bigl(\frac{2\pi r_j x_j}{N}\Bigr),\qquad x=(x_1,\dots,x_d)\in(\mathbb Z/N\mathbb Z)^d.  
\end{equation}
Expanding the product shows that
\[
  \Psi_r(x) \;=\; \sum_{s\in\{\pm 1\}^d} c_s \,\exp\!\Bigl(\frac{2\pi i}{N}\,\ip{s\circ r}{x}\Bigr),
\]
where $(s\circ r)_j=s_j r_j$ and $c_s\neq 0$. Since
$\mu(s\circ r)=2\sum_{j=1}^d \cos(\tfrac{2\pi r_j}{N})-2d$ is independent of $s$, $\Psi_r$ is a Laplcian eigenfunction with eigenvalue $\mu(r)$.

Set $Z_j(r):=\{x\in(\mathbb Z/N\mathbb Z)^d:\ \sin(\tfrac{2\pi r_j x_j}{N})=0\}$ and
\[
  E_{\mathrm{prod}}(r):=\bigcup_{j=1}^d Z_j(r).
\]
If \(\Psi_r\not\equiv0\), then \(\Psi_r\) vanishes on \(E_{\mathrm{prod}}(r)\), and hence \(E_{\mathrm{prod}}(r)\) is unobservable by Theorem~\ref{thm:finite}.

\begin{theorem}[Product construction and its size]\label{thm:product-size}
Let $N\ge 3$ and $d\ge 1$. If $4\nmid N$, let $p_{\min}$ denote the smallest odd prime divisor of $N$
(if $N$ is odd prime, then $p_{\min}=N$).
Define $r_j$ coordinate-wise by
\[
  r_j \;=\;
  \begin{cases}
    N/4, & \text{if } 4\mid N,\\
    N/p_{\min}, & \text{if } 4\nmid N,
  \end{cases}
  \qquad j=1,\dots,d,
\]
and set $r=(r_1,\dots,r_d)$ and $E_{\mathrm{prod}}:=E_{\mathrm{prod}}(r)$.

Then $E_{\mathrm{prod}}$ is not observable at any time and its cardinality is
\[
  |E_{\mathrm{prod}}|
  \;=\;
  \begin{cases}
    N^d-(N/2)^d \;=\; N^d\bigl(1-2^{-d}\bigr), & \text{if } 4\mid N,\\[0.4em]
    N^d-\bigl(N-N/p_{\min}\bigr)^d
    \;=\; N^d\Bigl[1-\bigl(1-\tfrac{1}{p_{\min}}\bigr)^d\Bigr], & \text{if } 4\nmid N.
  \end{cases}
\]
\end{theorem}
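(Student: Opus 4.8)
\emph{Approach.} The statement has two halves: that $E_{\mathrm{prod}}$ is unobservable, and the exact value of $|E_{\mathrm{prod}}|$. The first is immediate from the machinery already assembled; the second reduces to a one-variable counting problem via the product structure of $\Psi_r$ together with complementation.

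\emph{Unobservability.} The plan is to invoke Theorem~\ref{thm:finite}. First I would check that $r_j\not\equiv 0\pmod N$ in both cases: when $4\mid N$ this is clear since $N\ge 4$, and when $4\nmid N$ one observes that $N\ge 3$ with $4\nmid N$ forces $N$ to have an odd prime factor, so $p_{\min}$ is well defined and $1\le N/p_{\min}<N$. Consequently each coordinate factor $x_j\mapsto\sin(2\pi r_j x_j/N)$ is not identically zero on $\Z/N\Z$, hence $\Psi_r\not\equiv 0$; by the discussion preceding the theorem, $\Psi_r$ is a Laplace eigenfunction (with eigenvalue $\mu(r)$) that vanishes identically on $E_{\mathrm{prod}}(r)$. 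Thus condition (ii) of Theorem~\ref{thm:finite} fails for $E=E_{\mathrm{prod}}$, so $E_{\mathrm{prod}}$ is not observable at any time.

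\emph{Cardinality.} The key observation is that each slab $Z_j(r)=\{x:\sin(2\pi r_j x_j/N)=0\}$ constrains only the $j$-th coordinate, and since all $r_j$ coincide the "forbidden" residue set
\[
A:=\{\,a\in\Z/N\Z:\ \sin(2\pi r_1 a/N)\neq 0\,\}
\]
is the same in every coordinate. Hence $E_{\mathrm{prod}}^c=\bigcap_{j=1}^d Z_j(r)^c=A^d$, so that $|E_{\mathrm{prod}}|=N^d-|A|^d$, and everything reduces to computing $|A|$. Using $\sin\theta=0\iff\theta\in\pi\Z$, a residue $a$ is a zero iff $N\mid 2r_1 a$. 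When $4\mid N$, $r_1=N/4$, so $N\mid(N/2)a\iff 2\mid a$: the zeros are the $N/2$ even residues, $|A|=N/2$, and $|E_{\mathrm{prod}}|=N^d-(N/2)^d=N^d(1-2^{-d})$. When $4\nmid N$, $r_1=N/p_{\min}$, so $N\mid(2N/p_{\min})a\iff p_{\min}\mid 2a\iff p_{\min}\mid a$ (this last step uses that $p_{\min}$ is odd): the zeros are the $N/p_{\min}$ multiples of $p_{\min}$, $|A|=N-N/p_{\min}$, and $|E_{\mathrm{prod}}|=N^d-(N-N/p_{\min})^d=N^d[1-(1-1/p_{\min})^d]$. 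One could instead compute each $|Z_j(r)|$ by Proposition~\ref{prop:zero-count} and apply inclusion--exclusion, but passing to the complement is cleaner.

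\emph{Main difficulty.} There is no serious obstacle: the heart of the argument is the product/complement reduction, after which the count is essentially one line. The only points needing care are (a) verifying that $p_{\min}$ exists and that $r_j\not\equiv 0\pmod N$, so that $\Psi_r$ really is a nonzero eigenfunction and Theorem~\ref{thm:finite} applies, and (b) the modular step taking $\sin(2\pi r_1 a/N)=0$ to the stated divisibility condition, in particular the use of $\gcd(2,p_{\min})=1$ in the case $4\nmid N$.
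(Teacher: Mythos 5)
Your proposal is correct and follows essentially the same route as the paper: cite the preceding discussion that $\Psi_r$ is a nonvanishing Laplace eigenfunction killed on $E_{\mathrm{prod}}$ (so Theorem~\ref{thm:finite} gives unobservability), then compute $|E_{\mathrm{prod}}|$ by passing to the complement, which has a clean product structure, reducing the count to one coordinate. The only minor difference is that for $4\nmid N$ you carry out the divisibility computation $N\mid 2(N/p_{\min})a\iff p_{\min}\mid 2a\iff p_{\min}\mid a$ directly, whereas the paper quotes Proposition~\ref{prop:zero-count} to identify the zero set with $\{x_j\equiv 0\pmod{p_{\min}}\}$; these are the same observation. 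Your extra care in checking that $r_j\not\equiv 0$ (and implicitly $r_j\not\equiv N/2$) is a nice explicit verification that $\Psi_r\ne 0$, which the paper leaves to the reader.
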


\begin{proof}
We already showed $\Psi_r$ defined in \eqref{eq:Psi} is an eigenfunction and $\Psi_r|_{E_{\mathrm{prod}}}\equiv 0$.
It remains to check that $\Psi_r\not\equiv0$ and to count.

If $4\mid N$ and $r_j=N/4$, then
\[
    \Psi_r(1,\dots,1)=\prod_{j=1}^d \sin(\pi/2)=1,
\]
so $\Psi_r\not\equiv0$. Moreover, if
\[
    A_j(r):=\{a\in\mathbb Z/N\mathbb Z:\ \sin(\tfrac{2\pi r_j a}{N})=0\},
\]
then $A_j(r)$ is the set of even residues and hence $|A_j(r)|=N/2$. Since
\[
    Z_j(r)=(\mathbb Z/N\mathbb Z)^{j-1}\times A_j(r)\times
    (\mathbb Z/N\mathbb Z)^{d-j},
\]
we have $|Z_j(r)|=(N/2)N^{d-1}$. Independence across coordinates shows the complement of $E_{\mathrm{prod}}$ consists of points with all $x_j$ odd, of size $(N/2)^d$,
yielding $|E_{\mathrm{prod}}|=N^d-(N/2)^d$.

If $4\nmid N$, take $r_j=N/p_{\min}$. Then
\[
    \Psi_r(1,\dots,1)=\prod_{j=1}^d \sin(2\pi/p_{\min})\ne0,
\]
because $p_{\min}$ is an odd prime. Moreover, with $A_j(r)$ as above, the only sine zero in the image generated by $r_j$ is $0$, and therefore
\[
    A_j(r)=\{a\in\mathbb Z/N\mathbb Z:\ a\equiv0\ (\mathrm{mod}\ p_{\min})\},
    \qquad |A_j(r)|=N/p_{\min}.
\]
Consequently
\[
    Z_j(r)=(\mathbb Z/N\mathbb Z)^{j-1}\times A_j(r)\times
    (\mathbb Z/N\mathbb Z)^{d-j},
\]
and $|Z_j(r)|=(N/p_{\min})N^{d-1}$. Again independence across coordinates gives
$|((\mathbb Z/N\mathbb Z)^d)\setminus E_{\mathrm{prod}}|=\bigl(N-N/p_{\min}\bigr)^d$, hence the stated formula.
\end{proof}

\begin{proof}[Proof of Theorem~\ref{thm:tori}]
For \(\mathrm{(i)}\), by Theorem~\ref{thm:finite}, observability on the
finite graph \(T_N^1\) is equivalent to the absence of a nonzero
eigenfunction vanishing on \(E\). In the gcd condition, \(x-y\) is understood through arbitrary integer representatives of the residues; since \(N/\gcd(N,2)\mid N\), the condition is independent of the choice of representatives. Since the eigenspaces of
\(\Delta_{\rm disc}\) on \(T_N^1\) are
\[
    \operatorname{span}\{\varphi_0\},\qquad
    \operatorname{span}\{\varphi_{N/2}\}\quad (2\mid N),
\]
and
\[
    \operatorname{span}\{\varphi_k,\varphi_{-k}\},
    \qquad 2k\not\equiv0\pmod N,
\]
a nonzero eigenfunction can vanish on the nonempty set \(E\) only in the last case. For the last case, a function
\[
    a\varphi_k+b\varphi_{-k}
\]
vanishes on \(E\) nontrivially if and only if
\[
    \exp\left(\frac{4\pi i kx}{N}\right)
    \text{ is constant on }E.
\]
Indeed, if $a\varphi_k+b\varphi_{-k}$ vanishes on $E$, then $a,b\ne0$, and after multiplying by $\varphi_k$ we get
\[
    a\exp\left(\frac{4\pi i kx}{N}\right)+b=0,
    \qquad x\in E.
\]
The converse is immediate by choosing $a,b$ so that the preceding identity holds. This constancy condition is equivalent to
\[
    \frac{N}{\gcd(N,2k)}\mid x-y,
    \qquad x,y\in E.
\]
As \(k\) varies, the possible nontrivial divisors
\(\frac{N}{\gcd(N,2k)}\) are exactly the nontrivial divisors of
\(N/\gcd(N,2)\). Hence \(E\) is observable if and only if
\[
    \gcd\left\{
        \frac{N}{\gcd(N,2)},\ x-y:\ x,y\in E
    \right\}=1.
\]

For \(\mathrm{(ii)}\), let \(N_m\to\infty\) be an infinite sequence such
that
\[
    N_m\equiv0\pmod4.
\]
The nonzero function
\[
    \psi(x)=\prod_{j=1}^d \sin\left(\frac{\pi x_j}{2}\right)
\]
is a Laplacian eigenfunction on \((\mathbb Z/N_m\mathbb Z)^d\). Let
\[
    E_{N_m}:=\{x\in(\mathbb Z/N_m\mathbb Z)^d:\ \exists j,\ x_j\equiv0\pmod2\}.
\]
Then \(\psi|_{E_{N_m}}\equiv0\). By Theorem~\ref{thm:finite},
\(E_{N_m}\) is not observable for any \(T>0\). Moreover,
\[
    |E_{N_m}|=N_m^d-(N_m/2)^d=N_m^d(1-2^{-d}).
\]
Thus \(\mathrm{(ii)}\) holds with \(c=1-2^{-d}\). This completes the proof.
\end{proof}

We remark that the product construction over primes yields vanishing density. More precisely, assume $N$ is odd prime and take $r_j=N/p_{\min}=1$ for all $j$. Then $
  |E_{\mathrm{prod}}|=N^d-(N-1)^d \sim d\,N^{d-1}$ as \(N\to\infty\),  
that is, $|E_{\mathrm{prod}}|/N^d\to 0$ as $N\to\infty$.

For $r=(r_1,\dots,r_d)\in (\mathbb Z/N\mathbb Z)^d$, we introduce
\[
  \mathcal{V}_r \;=\; \mathrm{span}\,\bigl\{\, \varphi_{s\circ r}:\ s\in\{\pm 1\}^d \,\bigr\}.
\]
Then $\dim \mathcal{V}_r\le 2^d$ (in fact $=2^d$ if all $r_j\not\equiv 0$ and $r_j\not\equiv N/2$ when $2\mid N$). We conclude the paper by showing that, when $4\mid N$ and $r_j=N/4$, the eigenfunction $\Psi_r$ minimizes the number of nonzero entries among all nonzero vectors in $\mathcal{V}_r$; equivalently, it maximizes the zero set $Z(\psi)$ within $\mathcal{V}_r$.

\begin{proposition}[An uncertainty bound and optimality in $\mathcal{V}_r$]\label{prop:Vr-optimal}

Fix $r$ as in Theorem~\ref{thm:product-size}.
Then for any $0\ne \psi\in \mathcal{V}_r$, we have
\[
  |\mathrm{supp}\,\psi| \;\ge\; \frac{N^d}{2^d}.
\]
Moreover, if $4\mid N$ and $r_j=N/4$ for all $j$, then $\Psi_r$ is a maximizer of $|Z(\psi)|$ among all nonzero $\psi\in\mathcal{V}_r$.
\end{proposition}

\begin{proof}
Let $G=(\mathbb Z/N\mathbb Z)^d$ and let $\widehat{G}$ be its dual (identified with $(\mathbb Z/N\mathbb Z)^d$ via characters).
For any nonzero $f:G\to\mathbb{C}$ the Donoho--Stark uncertainty principle (see for example \cite{DS89, M06}) gives
$|\mathrm{supp}\, f|\cdot |\mathrm{supp}\, \widehat{f}|\ge |G|=N^d$.

Every $\psi\in\mathcal{V}_r$ has Fourier support contained in the set
$\{\,s\circ r:\ s\in\{\pm 1\}^d\,\}$, hence $|\mathrm{supp}\,\widehat{\psi}|\le 2^d$.
Applying the uncertainty inequality yields $|\mathrm{supp}\,\psi|\ge N^d/2^d$.

When $4\mid N$ and $r_j=N/4$, we computed in Theorem~\ref{thm:product-size} that
$\Psi_r(x)\ne 0$ iff all coordinates $x_j$ are odd, so $|\mathrm{supp}\,\Psi_r|=(N/2)^d$.
Thus $\Psi_r$ attains the lower bound in $\mathcal{V}_r$, which implies that $\Psi_r$ is one maximizer of $|Z(\psi)|$
within $\mathcal{V}_r$.
\end{proof}

\appendix
\section{A Logvinenko--Sereda proof of the half-thickness criterion}
\label{app:LS-half-thick}

In this appendix we give an alternative proof, in the free case, that
\(\frac12\)-thickness implies observability in every positive time.  That is,  we
consider \(u(t)=e^{it\Delta_{\rm disc}}u_0\), \(u_0\in \ell^2(\Z)\).

We use the Fourier transform on \(\mathbb Z\)
\[
    \widehat f(\theta)=
    \frac1{\sqrt{2\pi}}
    \sum_{n\in\mathbb Z} f(n)e^{in\theta},
    \qquad \theta\in\mathbb T,
\]
where \(\mathbb T=[-\pi,\pi)\).  The Fourier multiplier of
\(\Delta_{\rm disc}\) is
\[
    \varphi(\theta)=2\cos\theta-2,
    \qquad \theta\in\mathbb T.
\]
Thus on the Fourier side we have
\[
    \widehat{\Delta_{\rm disc}f}(\theta)
    =
    \varphi(\theta)\widehat f(\theta).
\]
For convenience, we introduce the lower Beurling density.
\begin{definition}[lower Beurling density]\label{Beurling}
   For a discrete set $E\subset\Z$, its lower Beurling density $d^{-}\left(E\right)$ is given by
\begin{equation*}
    d^{-}\left(E\right):=\liminf_{R\to\infty}\inf_{x\in\R}\frac{\left|E\cap\left[x-R,x+R\right]\right|}{2R}.
\end{equation*}
\end{definition}

We first establish the following density form of the discrete
Logvinenko--Sereda inequality by Beurling's sampling theorem, see \cite[Theorem 1.1]{GRS18}.

\begin{lemma}[Discrete Logvinenko--Sereda inequality]
\label{lem:app-discrete-LS}
Let \(E\subset\mathbb Z\), and let \(I\subset\mathbb T\) be an interval of
length \(a\in(0,2\pi)\). Assume
\[
    d^-(E)>\frac{a}{2\pi}.
\]
Then there exists a constant \(C_{\rm LS}=C_{\rm LS}(E,a)>0\), independent
of the position of \(I\), such that every \(f\in\ell^2(\mathbb Z)\) with
\({\rm spt}\,\widehat f\subset I\) satisfies
\[
    \|f\|_{\ell^2(\mathbb Z)}^2
    \le
    C_{\rm LS}\|f\|_{\ell^2(E)}^2 .
\]
\end{lemma}

\begin{proof}
By modulation, it is enough to treat the case where \(I\) is centered at
the origin.  Put
$
    \sigma:=\frac{a}{2\pi}.
$
For \(f\in\ell^2(\mathbb Z)\) with \({\rm spt}\,\widehat f\subset I\), define
the Paley--Wiener extension
\[
    F(x)
    =
    \frac1{\sqrt{2\pi}}\int_I \widehat f(\theta)e^{-ix\theta}\,\d\theta,
    \qquad x\in\mathbb R .
\]
Then \(F(n)=f(n)\) for \(n\in\mathbb Z\), and the support of the Fourier transform \(\mathcal F f\) on \(\R\) satisfies
\[
    {\rm spt}\,\mathcal F F\subset[-\pi\sigma,\pi\sigma].
\]
Thus \(F\in\mathcal{PW}_{\sigma}\). Since
\[
    d^-(\sigma E)=\frac{d^-(E)}{\sigma}>1,
\]
Beurling's sampling theorem \cite[Theorem 1.1]{GRS18} applied to the rescaled function
\(G(x):=F(x/\sigma)\in\mathcal{PW}_1\) gives
\[
    \|G\|_{L^2(\mathbb R)}^2
    \le
    C
    \sum_{n\in E}|G(\sigma n)|^2
    =
    C
    \sum_{n\in E}|F(n)|^2 .
\]
Because \(\|G\|_{L^2(\mathbb R)}^2=\sigma\|F\|_{L^2(\mathbb R)}^2\), we get
\[
    \|F\|_{L^2(\mathbb R)}^2
    \le
    C_{\rm LS}
    \sum_{n\in E}|F(n)|^2.
\]
By Plancherel,
\[
    \|F\|_{L^2(\mathbb R)}^2
    =
    \|\widehat f\|_{L^2(\mathbb T)}^2
    =
    \|f\|_{\ell^2(\mathbb Z)}^2,
\]
and since \(F(n)=f(n)\), the desired estimate follows. The independence of
the position of \(I\) follows from the initial modulation, which does not
change any \(\ell^2\)-norm.
\end{proof}

Next we record a simple covering property of the multiplier
\(\varphi(\theta)=2\cos\theta-2\).

\begin{lemma}[Uniform one-interval localization of the multiplier]
\label{lem:app-multiplier-cover}
Let \(a\in(\pi,2\pi)\). Then there exists \(\rho_a>0\) such that for every
\(\lambda\in\mathbb R\) one can find an interval \(J_\lambda\subset\mathbb T\)
of length \(a\) satisfying
\[
    |\varphi(\theta)-\lambda|\ge \rho_a,
    \qquad
    \theta\in\mathbb T\setminus J_\lambda .
\]
\end{lemma}

\begin{proof}
First let \(\lambda\in[-4,0]\). The level set
$
    \varphi^{-1}(\lambda)
$
is either a singleton, when \(\lambda=0\) or \(\lambda=-4\), or a pair
\(\{\theta_\lambda,-\theta_\lambda\}\). In all cases it is contained in
some open interval of \(\mathbb T\) of length strictly smaller than \(a\),
because the maximal circular distance between \(\theta_\lambda\) and
\(-\theta_\lambda\) is \(\pi<a\).

Hence, for each \(\lambda\in[-4,0]\), we may choose an interval
\(J_\lambda^0\subset\mathbb T\) of length strictly smaller than \(a\) and a
number \(r_\lambda>0\) such that
\[
    |\varphi(\theta)-\lambda|\ge r_\lambda,
    \qquad
    \theta\in\mathbb T\setminus J_\lambda^0 .
\]
By compactness of \([-4,0]\), finitely many neighborhoods
\((\lambda_j-r_{\lambda_j}/2,\lambda_j+r_{\lambda_j}/2)\) cover
\([-4,0]\). After enlarging the corresponding \(J_{\lambda_j}^0\)'s, if
necessary, to intervals of length exactly \(a\), and setting
\[
    \rho_a
    :=
    \frac12\min_j r_{\lambda_j}>0,
\]
we obtain the desired estimate for all \(\lambda\in[-4,0]\).

If \(\lambda<-4\), we use the interval chosen for the endpoint
\(\lambda=-4\). Since \(\varphi(\theta)\ge -4\), the lower bound outside
that interval remains valid, possibly after decreasing \(\rho_a\).
Similarly, if \(\lambda>0\), we use the interval chosen for \(\lambda=0\).
This proves the lemma for every \(\lambda\in\mathbb R\).
\end{proof}

\begin{proposition}[Half-thickness implies observability]
\label{prop:app-half-thick-some-time}
Let \(E\subset\mathbb Z\) be \(\frac12\)-thick. Then \(E\) is observable at
some time for the free discrete Schr\"odinger equation on \(\mathbb Z\).
\end{proposition}

\begin{proof}
If \(E\subset\mathbb Z\) is \(\frac12\)-thick, then \(d^-(E)>\frac12\).
Choose
$
    a\in\bigl(\pi,\,2\pi d^-(E)\bigr).
$
Let \(\rho_a>0\) be the constant from
Lemma~\ref{lem:app-multiplier-cover}. For \(\lambda\in\mathbb R\), let
\(J_\lambda\subset\mathbb T\) be an interval of length \(a\) such that
\[
    |\varphi(\theta)-\lambda|\ge \rho_a,
    \qquad
    \theta\in\mathbb T\setminus J_\lambda .
\]
Let \(P_\lambda\) be the Fourier projection
\[
    \widehat{P_\lambda f}
    =
    \1_{J_\lambda}\widehat f .
\]
Since \(a<2\pi d^-(E)\), Lemma~\ref{lem:app-discrete-LS} gives
\[
    \|P_\lambda f\|_{\ell^2(\mathbb Z)}^2
    \le
    C_{\rm LS}\|P_\lambda f\|_{\ell^2(E)}^2,
    \qquad f\in\ell^2(\mathbb Z),
\]
with a constant independent of \(\lambda\). Hence
\[
\begin{aligned}
    \|f\|_{\ell^2(\mathbb Z)}^2
    &=
    \|P_\lambda f\|_{\ell^2(\mathbb Z)}^2
    +
    \|(I-P_\lambda)f\|_{\ell^2(\mathbb Z)}^2                                      \\
    &\le
    C_{\rm LS}\|P_\lambda f\|_{\ell^2(E)}^2
    +
    \|(I-P_\lambda)f\|_{\ell^2(\mathbb Z)}^2                                      \\
    &\le
    2C_{\rm LS}\|f\|_{\ell^2(E)}^2
    +
    (1+2C_{\rm LS})\|(I-P_\lambda)f\|_{\ell^2(\mathbb Z)}^2 .
\end{aligned}
\]
On the other hand, by Plancherel and the definition of \(J_\lambda\),
\[
\begin{aligned}
    \|(I-P_\lambda)f\|_{\ell^2(\mathbb Z)}^2
    &=
    \int_{\mathbb T\setminus J_\lambda}|\widehat f(\theta)|^2\,\d\theta              \\
    &\le
    \rho_a^{-2}
    \int_{\mathbb T\setminus J_\lambda}
    |\varphi(\theta)-\lambda|^2|\widehat f(\theta)|^2\,\d\theta                    \\
    &\le
    \rho_a^{-2}
    \|(\Delta_{\rm disc}-\lambda)f\|_{\ell^2(\mathbb Z)}^2 .
\end{aligned}
\]
Therefore, for all \(\lambda\in\mathbb R\) and all
\(f\in\ell^2(\mathbb Z)\),
\[
    \|f\|_{\ell^2(\mathbb Z)}^2
    \le
    M
    \|(\Delta_{\rm disc}-\lambda)f\|_{\ell^2(\mathbb Z)}^2
    +
    m
    \|f\|_{\ell^2(E)}^2,
\]
where
\[
    M:=\frac{1+2C_{\rm LS}}{\rho_a^2},
    \qquad
    m:=2C_{\rm LS}.
\]
This is precisely the observable resolvent estimate for
\[
    \mathscr{A}=\Delta_{\rm disc},\quad \mathscr{C}=\1_E\cdot{\rm Id}_{\ell^2(\mathbb Z)}.
\]
By the Hautus resolvent criterion, Theorem~\ref{thm:M05}, the free
Schr\"odinger equation is observable at some time. By Proposition \ref{prop:some-any-asymptotic-c}, $E$ is observable at any time.
\end{proof}

\section*{Acknowledgement}
 We are grateful to Chenmin Sun for valuable suggestions and conversations. We sincerely thank Professors Jingwei Guo, Wen Huang, Shiping Liu and Shunlin Shen for their discussions. H.Z. is supported by the National Key R \& D Program of China 2023YFA1010200 and the
 National Natural Science Foundation of China No. 12431004. Z.W. is supported by the National Natural Science Foundation of China No. 12341102.

\subsection*{Availability of Data}  No data were used for the research described in the article.

\subsection*{Declarations of Conflict of Interest}
The authors do not have any possible conflicts of
interest.

\end{document}